\documentclass[a4paper,12pt]{article}

\usepackage{a4,amssymb,amsmath,amsthm,url,xcolor,enumerate,float,lineno}
\usepackage{bezier,amsfonts,amssymb,graphicx,amsthm,url}
\usepackage[utf8]{inputenc}
\usepackage{subcaption}
\usepackage{amsmath}
\usepackage{amsfonts}
\usepackage{amssymb,amsthm}
\usepackage{url}
\usepackage{xcolor}
\usepackage{tikzit}

\newtheorem{theorem}{Theorem}[section]
\newtheorem{definition}{Definition}[section]
\newtheorem{proposition}[theorem]{Proposition}

\newtheorem{lemma}[theorem]{Lemma}
\newtheorem*{theorem*}{Theorem}
\newtheorem{problem}{Problem}

\def\kaxxa{{\vcenter {\hrule height .2mm
\hbox{\vrule width .2mm height 2mm \kern 2mm
\vrule width .2mm} \hrule height .2mm}}}

\newcommand{\cay}[2]{\mathsf{Cay}\left(#1 ; #2\right)}


\tikzstyle{vertex}=[fill=black, draw=black, shape=circle, thick, scale=0.75]

\tikzstyle{edge}=[-, fill={rgb,255: red,0; green,128; blue,128}]
\tikzstyle{dir_edge}=[->]
\tikzstyle{blue_edge}=[-, draw=blue, ultra thick]
\tikzstyle{red_edge}=[-, draw=red, ultra thick]

\tikzset{every picture/.style={font issue=\footnotesize},
         font issue/.style={execute at begin picture={#1\selectfont}}
        }

\title{Flip colouring of graphs II}
\author{Xandru Mifsud \\ University of Malta}
\date{}

\DeclareGraphicsExtensions{.pdf,.png,.jpg}

\begin{document}


\maketitle

\begin{abstract}
We give results concerning two problems on the recently introduced \textit{flip colourings of graphs}. For positive integers $b, r$ with $b < r$, we say that a $b + r$ regular graph is a $(b,r)$-\textit{flip graph} if there exists a red/blue edge colouring such that the red degree of every vertex is $r$, the blue degree of every vertex is $b$, yet in the closed neighbourhood of every vertex there are more blue edges than red edges.

 We prove that for integers $b, r$ with $4 \leq b < r < b + 2 \left\lfloor\frac{b+2}{6}\right\rfloor^2$, small constructions of $(b,r)$-flip graphs on $\Theta(b+r)$ vertices are possible. Furthermore, we prove that there exist $k$-flip sequences $(a_1, \dots, a_k)$ where $k > 4$, such that $a_k$ can be arbitrarily large whilst $a_i$ is constant for $1 \leq i < \frac{k}{4}$. 
\end{abstract}
\section{Introduction}

Flip colourings of graphs were introduced in \cite{caro2023flip}, as yet another example of local versus global phenomena studied in graph theory, such as \cite{ABDULLAH20151, caro2018effect, FISHBURN1986165}. 

For positive integers $b, r$ with $b < r$, we say that a $b + r$ regular graph is a $(b,r)$-\textit{flip graph} if there exists an edge colouring $f: E(G) \rightarrow \{\mbox{\textsf{blue}, \textsf{red}}\}$ satisfying the following:
\begin{enumerate}[i.]
	\item The subgraphs induced by the \textsf{blue} and \textsf{red} edges are $b$ and $r$ regular respectively, resulting in a global majority ordering since $b < r$, where across the entire graph `\textsf{red}' wins against `\textsf{blue}'.
	\item On the other-hand, for every vertex $v$, the number of \textsf{blue} edges in the closed neighbourhood of $v$ is \textit{greater} than the number of \textsf{red} edges, resulting in a locally opposite majority ordering where locally `\textsf{blue}' wins against `\textsf{red}'.
\end{enumerate}

We term such a graph as a $(b, r)$-flip graph due to the local v. global majority flip they demonstrate. 

Several open problems concerning flip colourings were posed in \cite{caro2023flip}, some of which we consider here, whilst others have been studied in \cite{sheffield2025}. 

Before introducing the general problem for $k \geq 2$ colours, we establish some notational conventions. 

The open neighbourhood $N^G (v)$ is the set of neighbours of a vertex $v$ in a graph $G$, and the closed neighbourhood $N^G[v]$ is $N^G (v) \cup \{v\}$. 
Let $k \in \mathbb{N}$ and let $f \colon E(G) \rightarrow \{1, \dots, k\}$ be an edge-colouring of $G$. For $1 \leq j \leq k$,
\begin{enumerate}[i.]
	\item Given a subset $S$ of $V(G)$, $E^G_j (S)$ is the set of edges coloured $j$ in the vertex-induced subgraph of $G$ by $S$, and $e^G_j (S) = |E^G_j (S)|$.
	\item For a vertex $v$, let $e^G_j [v] = e^G_j \left(N^G[v]\right)$ and $e^G_j (v) = e^G_j \left(N^G(v)\right)$.
	\item For a vertex $v$, $\deg_j (v)$ is the number of edges incident to $v$ coloured $j$.
\end{enumerate}

When there is no ambiguity, we simplify our notation by removing any symbolic reference to the graph.

We are interested in the following problem: Given $k \geq 2$, a $d$-regular graph $G$ and an increasing positive integer sequence $a_1 < a_2 < \dots < a_k$ such that $d = \sum_{j=1}^k a_j$, does there exist an edge-colouring on $k$ colours such that:
\begin{enumerate}[i.]
	\item the set of edges coloured $j$ spans an $a_j$-regular subgraph of $G$, namely $\deg_j(v) = a_j$ for every $v\in V$,
	\item and for every vertex $v \in  V$,  $e_k[v] < e_{k-1}[v] < \ldots <  e_1[v]$.
\end{enumerate}   
 
If such an edge-colouring exists then $G$ is said to be an $(a_1,\ldots,a_k)$-flip graph, or more simply a $k$-flip graph, and $(a_1, \dots, a_k)$ is called a $k$-flip sequence of $G$. An illustrative example is given in Figure \ref{flip_4_3_smallest}.

\begin{figure}[ht!]
	\centering
	\ctikzfig{figures/flip_4_3_smallest}
	\caption{Smallest known $(3, 4)$-flip graph, with the subgraph induced by the closed neighbourhood of any vertex $v$ illustrated on the right. This is a $(3, 4)$-flip graph since: $\deg_{\mathsf{blue}}(v) = 3 < 4 = \deg_{\mathsf{red}}(v)$ but $e_{\mathsf{blue}}[v] = 7 > 6 = e_{\mathsf{red}}(v)$.}
	\label{flip_4_3_smallest}
\end{figure}

The case when $k = 2$ is fully characterised by the following theorem.

\begin{theorem}[Theorem 3.1, \cite{caro2023flip}]\label{RBClassification}
	Let $r, b \in \mathbb{N}$. If $3 \leq b < r \leq \binom{b+1}{2} - 1$ then there exists a $(b,r)$-flip graph, and both the upper and lower bounds are sharp.
\end{theorem}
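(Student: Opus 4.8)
\section*{Proof proposal}

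The plan is to treat the two directions of Theorem~\ref{RBClassification} separately. For the positive direction I would give an explicit construction; for sharpness of the upper bound I would run a global double-counting argument, and the same argument will force $b\ge 3$, so the lower bound (and its sharpness) comes along for free. Throughout I call the colour with prescribed degree $b$ ``colour~$1$'' and the one with prescribed degree $r$ ``colour~$2$'', so the flip condition is exactly $e_2[v] < e_1[v]$ for every $v$.

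\emph{Construction (existence for $3\le b<r\le\binom{b+1}{2}-1$).} Let colour~$1$ be a disjoint union of $N$ cliques $K_{b+1}$, say $C_1,\dots,C_N$. This is $b$-regular, and since $C_i\subseteq N[v]$ whenever $v\in C_i$, we get $e_1[v]\ge\binom{b+1}{2}$ for every vertex $v$. For colour~$2$, fix any $r$-regular triangle-free graph $\mathcal{H}$ on $\{1,\dots,N\}$ --- for instance $K_{r,r}$, so $N=2r$ --- and for every edge $ij$ of $\mathcal{H}$ add, in colour~$2$, a perfect matching between $C_i$ and $C_j$ (possible as $|C_i|=|C_j|=b+1$). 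Each vertex of $C_i$ then acquires colour-$2$-degree $d_{\mathcal{H}}(i)=r$, the two colour classes are edge-disjoint, and the union $G$ is $(b+r)$-regular. The key point is that no colour-$2$ edge has a common neighbour in $G$: for a colour-$2$ edge between $C_i$ and $C_j$, a common neighbour inside $C_i$ (or $C_j$) is excluded because the matching $C_i$--$C_j$ is a bijection, and a common neighbour in a third clique $C_\ell$ would give edges $i\ell$ and $j\ell$ of $\mathcal{H}$ and hence a triangle $ij\ell$ in $\mathcal{H}$. Consequently, for every $v$ the only colour-$2$ edges inside $N[v]$ are the $r$ edges at $v$ itself, so $e_2[v]=r\le\binom{b+1}{2}-1<\binom{b+1}{2}\le e_1[v]$, and $G$ is a $(b,r)$-flip graph. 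Taking $r=\binom{b+1}{2}-1$ shows the upper endpoint is realised.

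\emph{Impossibility (no $(b,r)$-flip graph when $r\ge\binom{b+1}{2}$).} Suppose $G$ on $n$ vertices is a $(b,r)$-flip graph with colour classes $E_1$ (degree $b$) and $E_2$ (degree $r$). Then $e_1[v]-e_2[v]\ge 1$ for all $v$, so $\sum_v(e_1[v]-e_2[v])\ge n$. Writing $e_j[v]=\deg_j(v)+e_j(v)$ and summing, each colour-$j$ edge $xy$ is counted once for every vertex of $N[x]\cap N[y]=\{x,y\}\cup(N(x)\cap N(y))$; hence $\sum_v e_1[v]=nb+\sum_{xy\in E_1}\operatorname{codeg}(x,y)$ and $\sum_v e_2[v]=nr+\sum_{xy\in E_2}\operatorname{codeg}(x,y)$, where $\operatorname{codeg}(x,y)$, the number of common neighbours of adjacent $x,y$, equals the number of triangles through $xy$. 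Subtracting and rearranging gives $\sum_{xy\in E_1}\operatorname{codeg}(x,y)-\sum_{xy\in E_2}\operatorname{codeg}(x,y)\ge n(r-b+1)$. Regrouping the two sums over the triangles $T$ of $G$ and letting $b_T\in\{0,1,2,3\}$ be the number of colour-$1$ edges of $T$, the left side is exactly $\sum_T(2b_T-3)$, so we need $\sum_T(2b_T-3)\ge n(r-b+1)$.

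The remaining step, which I expect to be the only genuinely delicate point, is the upper bound $\sum_T(2b_T-3)\le n\binom{b}{2}$; granting it, $r-b+1\le\binom{b}{2}$, i.e.\ $r\le\binom{b+1}{2}-1$, contradicting $r\ge\binom{b+1}{2}$. To prove the bound, note $2b_T-3\le 0$ unless $T$ has two or three colour-$1$ edges, in which cases $2b_T-3$ is $1$ and $3$ respectively; so $\sum_T(2b_T-3)$ is at most three times the number of triangles with all three edges in colour~$1$, plus the number of triangles with exactly two edges in colour~$1$. Each all-colour-$1$ triangle contains three ``colour-$1$ cherries'' (length-two paths with both edges in colour~$1$), each two-colour-$1$-edge triangle contains exactly one, and, since a cherry lies in at most one triangle, all these cherries are pairwise distinct; as the total number of colour-$1$ cherries is $\sum_v\binom{\deg_1(v)}{2}=n\binom{b}{2}$, the bound follows. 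Finally, for $b\le 2$ the interval $b<r\le\binom{b+1}{2}-1$ is empty, while the impossibility argument already forbids a $(b,r)$-flip graph for every $r>b$ there; hence the lower bound $b\ge 3$ is sharp as well.
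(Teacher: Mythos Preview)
The paper does not prove Theorem~\ref{RBClassification}; it is quoted from \cite{caro2023flip} (Theorem~3.1 there) and used only as a black box in later arguments, so there is nothing in the present paper to compare your attempt against.

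For what it is worth, your proposal stands on its own as a correct proof. The construction---cliques $K_{b+1}$ placed at the vertices of an $r$-regular triangle-free graph $\mathcal H$, with perfect matchings in colour~$2$ along the edges of $\mathcal H$---yields exactly $e_1[v]=\binom{b+1}{2}$ and $e_2[v]=r$, so the flip condition holds precisely when $r\le\binom{b+1}{2}-1$. The sharpness argument is also sound: the identity
\[
\sum_{xy\in E_1}\operatorname{codeg}(x,y)-\sum_{xy\in E_2}\operatorname{codeg}(x,y)=\sum_T(2b_T-3)
\]
is correct, and since a triangle with $b_T$ colour-$1$ edges contains $\binom{b_T}{2}$ colour-$1$ cherries while each colour-$1$ cherry closes into at most one triangle, one has $\sum_T(2b_T-3)\le\sum_T\binom{b_T}{2}\le\sum_v\binom{\deg_1(v)}{2}=n\binom{b}{2}$, giving $r\le\binom{b+1}{2}-1$ as required. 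The lower bound $b\ge3$ and its sharpness then follow exactly as you say.
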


Given a $k$-flip sequence $(a_1, \dots, a_k)$, a problem of interest is that of finding the smallest order $h(a_1, \dots, a_k)$ of a graph realising it. In the case when $k = 2$, the following theorem gives the best known upper bound on $h(b,r)$. 

\begin{theorem}[Corollary 3.6, \cite{caro2023flip}]\label{oldBound}
	Let $b, r \in \mathbb{N}$ such that $3 \leq b < r \le \binom{b+1}{2}-1$. Then,
$$ h(b,r) \leq 2 \left(r + b + 1 - \left\lfloor \dfrac{5 + \sqrt{1+8(r-b)}}{2} \right\rfloor\right) \left\lfloor \dfrac{5 + \sqrt{1+8(r-b)}}{2} \right\rfloor.
$$	
\end{theorem}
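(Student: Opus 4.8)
The plan is to prove the bound constructively, by building for every admissible pair $(b,r)$ a $(b,r)$-flip graph on at most $2m(b+r+1-m)$ vertices, where $m=\lfloor(5+\sqrt{1+8(r-b)})/2\rfloor$. A convenient preliminary reformulation is that $m-2$ is the largest integer $t$ with $\binom{t}{2}\le r-b$, so $\binom{m-2}{2}\le r-b<\binom{m-1}{2}$, and one checks $4\le m\le b+1$ throughout the range $3\le b<r\le\binom{b+1}{2}-1$. Since $x\mapsto 2x(b+r+1-x)$ is concave and the order $2m(b+r+1-m)$ is its value at $x=m$, the Corollary should follow from a construction lemma producing, for each split $b+r+1=x+y$ (with $x\le y$ and $\binom{x-1}{2}>r-b$), a $(b,r)$-flip graph on $2x(b+r+1-x)$ vertices; one then takes the smallest feasible split, $x=m$.

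For the construction itself I would use a blow-up / clustered graph designed so that the blue colour class is locally dense — which forces the closed-neighbourhood blue count $e_1[v]$ up — while the red class is spread out, which keeps $e_2[v]$ down. Concretely: group the vertices into clusters, let the blue class consist of cliques supported on the clusters, and route all remaining edges, which will all be red, along a bipartite pattern on the clusters, with the between-cluster red degrees chosen so that every vertex picks up a controlled number of further red edges. Two features make the computation tractable: a blue edge never leaves its cluster, and the bipartite (hence triangle-free) inter-cluster pattern forces the red neighbourhood of any vertex to induce no red edge between distinct clusters. Consequently $N[v]$ is just $v$'s own cluster together with its few outside red neighbours, and on expanding $e_1[v]$ and $e_2[v]$ almost every cross-term vanishes. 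I would then verify, in order: (a) $(b+r)$-regularity and the exact $b$- and $r$-regularity of the colour classes, which reduces to standard regular decompositions of complete and complete-bipartite graphs (there are plenty of clusters, so there is ample room); (b) $|N[v]|=b+r+1$, which is immediate from the structure of $N[v]$; and (c) the flip inequality $e_2[v]<e_1[v]$, which after substituting $b+r+1=x+y$ collapses to one counting inequality of $\binom{\cdot}{2}$-type, valid precisely because $\binom{x-1}{2}>r-b$ — and this is exactly where both the hypothesis $r\le\binom{b+1}{2}-1$ and the definition of $m$ get used.

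The step I expect to be the main obstacle is the discrete optimisation that pins $m$ down: one must show the construction can actually be realised — every degree integral, each colour class exactly regular, and $e_2[v]<e_1[v]$ holding at all vertices simultaneously — exactly when $\binom{x-1}{2}>r-b$, and that at the next smaller value of $x$ the inequality in (c) genuinely breaks. In effect, the budget of size $\binom{b+1}{2}-1-r$ supplied by the hypothesis must be matched against how $r-b$ sits between consecutive triangular numbers; carrying this out uniformly over all admissible $(b,r)$, and separately patching the boundary regimes ($r$ close to $b$, $r$ close to $\binom{b+1}{2}-1$, and small $b$) by ad hoc adjustments of the inside-cluster colouring, is where almost all of the work lies. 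It is presumably also why the statement is recorded as a corollary of a more flexible construction lemma rather than proved in one shot.
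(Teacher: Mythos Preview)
This statement is not proved in the present paper: it is quoted verbatim as Corollary~3.6 of \cite{caro2023flip} and used only as a benchmark against which the new bound of Theorem~\ref{newBound} is compared. There is therefore no proof here to set your proposal against.

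As to the proposal itself, it is a plausible outline --- a clustered blow-up with blue intra-cluster cliques and a sparse red inter-cluster pattern is exactly the sort of construction one expects behind a bound of the shape $2m(b+r+1-m)$ --- but what you have written is a plan, not a proof. You correctly identify the crux (matching the discrete optimisation to the value $m=\lfloor(5+\sqrt{1+8(r-b)})/2\rfloor$ and verifying the flip inequality at every vertex), and you explicitly flag that ``almost all of the work lies'' there without doing any of it. In particular, the claimed collapse of $e_2[v]<e_1[v]$ to a single $\binom{\cdot}{2}$-type inequality depends on the precise inter-cluster red pattern, which you leave unspecified beyond ``bipartite''; and the assertion that the construction is realisable exactly when $\binom{x-1}{2}>r-b$ is stated but not argued. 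To turn this into a proof you would need to actually specify the cluster sizes and the red routing, compute $e_1[v]$ and $e_2[v]$ exactly, and then carry out the optimisation --- or, more simply, consult the cited source.
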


Smaller constructions for certain 2-flip sequences, such as $(3, 4)$, than those given by this upper bound are known. For $4 \leq b < r < b + 2 \left\lfloor\frac{b+2}{6}\right\rfloor^2$, we shall improve this to $h(b,r) \leq 16 \left(2 + \left\lfloor\frac{r}{2}\right\rfloor + \left\lfloor\frac{b+2}{2}\right\rfloor - 2\left\lfloor\frac{b+2}{6}\right\rfloor\right)$ 

For three colours, as in the case for two colours (Theorem \ref{RBClassification}), the largest colour-degree is quadratically bound in terms of the smallest.

\begin{theorem}[Theorem 4.1, \cite{caro2023flip}]\label{3colour_bound}
If $(a_1, a_2, a_3)$ is a $3$-flip sequence, then
$$a_3 < 2(a_1)^2.$$
\end{theorem}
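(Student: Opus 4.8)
The plan is to read a lower bound on $e_1[v]$ off the flip chain at a single vertex, and then to bound $e_1[v]$ from above at a well-chosen vertex using only the $a_1$-regularity of the colour-$1$ class.

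First I would record the identity $e_j[v] = a_j + e_j(v)$: inside $N[v]$ the colour-$j$ edges are the $\deg_j(v)=a_j$ edges at $v$, together with the colour-$j$ edges spanned by $N(v)$. Substituting this into $e_3[v] < e_2[v] < e_1[v]$ and discarding the term $e_3(v)\ge 0$ gives, at \emph{every} vertex $v$,
\[
a_3 \;\le\; a_3+e_3(v) \;=\; e_3[v] \;<\; e_1[v] \;=\; a_1+e_1(v).
\]
Thus it suffices to exhibit one vertex $v$ with $e_1[v]\le 2a_1^2$ (equivalently $e_1(v)\le 2a_1^2-a_1$); the display then gives $a_3 < 2a_1^2$.

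For the upper bound, the constant $2$ should come from a colour-$1$ edge. If $pq$ is coloured $1$ then, in the $a_1$-regular graph $G_1$, the sets $N_1[p]$ and $N_1[q]$ each have $a_1+1$ vertices and both contain $p$ and $q$, so $|N_1[p]\cup N_1[q]|\le 2a_1$, and hence at most $2a_1^2$ colour-$1$ edges of $G$ touch $N_1[p]\cup N_1[q]$. So the theorem would reduce to choosing a colour-$1$ edge $pq$ for which, at (say) the endpoint $p$, every colour-$1$ edge spanned by $N[p]$ touches $N_1[p]\cup N_1[q]$ — equivalently, for which every colour-$1$ edge spanned by $N_2(p)\cup N_3(p)$ has an endpoint in $N_1(q)$.

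The main obstacle is exactly this last point. A priori $N_2(p)\cup N_3(p)$ has $a_2+a_3$ vertices, so $a_1$-regularity of $G_1$ only bounds the colour-$1$ edges it spans by $\tfrac{1}{2}a_1(a_2+a_3)$, which is far too large; cutting this down must use the flip inequalities at the \emph{other} vertices. Concretely, a colour-$1$ edge $uu'$ spanned by $N_2(p)\cup N_3(p)$ gives a triangle $puu'$ with $uu'$ coloured $1$ and $pu,pu'$ coloured $2$ or $3$, and the chains $e_1(u)>e_2(u)>e_3(u)$ at such vertices $u$ (together with the regularity of all three colour classes) should limit how often this configuration arises — either directly at a suitably chosen edge $pq$, or after averaging over all vertices $v$ and using that $G_1$ contains only $\tfrac{1}{2}a_1 n$ of the $\tfrac{1}{2}(a_1+a_2+a_3)n$ edges of $G$, so that the mean over $v$ of the number of colour-$1$ edges spanned by $N_2(v)\cup N_3(v)$ is small enough. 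I expect essentially all the work, and the precise emergence of the factor $2$, to lie here, presumably by an argument analogous to the one giving the quadratic bound $\binom{b+1}{2}-1$ in the two-colour case; the reduction above is routine.
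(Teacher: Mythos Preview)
This theorem is not proved in the present paper. It is quoted as Theorem~4.1 of \cite{caro2023flip} and used as a black box (notably in the proof of Theorem~\ref{qKUpperBound}); no argument for it appears here. So there is nothing against which to compare your proposal.

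As for the proposal itself, it is not a proof but an outline that stops precisely where the difficulty lies. The reduction to finding a single vertex $v$ with $e_1[v]\le 2a_1^2$ is fine, and the observation that for a colour-$1$ edge $pq$ one has $|N_1[p]\cup N_1[q]|\le 2a_1$ is correct. But your own analysis shows that this only controls the colour-$1$ edges touching $N_1[p]\cup N_1[q]$, whereas $e_1[p]$ also counts the colour-$1$ edges spanned entirely by $N_2(p)\cup N_3(p)$. You then conjecture that ``the flip inequalities at the other vertices'' or an averaging argument should handle this, without indicating any mechanism, and close by saying you ``expect essentially all the work'' to lie in this unperformed step. That is a genuine gap: nothing in the write-up forces the colour-$1$ edges inside $N_2(p)\cup N_3(p)$ to be few for any particular $p$, and the crude bound $\tfrac{1}{2}a_1(a_2+a_3)$ you mention is indeed useless. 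If you want to pursue this route you would need to actually produce the averaging or extremal argument; as written, the proposal establishes only $a_3 < e_1[v]$, which is immediate from the definition.
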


However, for four or more colours, it is known that no such relationship exists, as highlighted in the following.

\begin{theorem}[Theorem 4.3, \cite{caro2023flip}] \label{arbitraryGapsThm_h_1}
	Let $k \in \mathbb{N}, k > 3$. Then there is some constant $m = m(k) \in \mathbb{N}$ such that for all $N \in \mathbb{N}$, there exists a $k$-flip sequence $\left(a_1, a_2, \dots, a_k\right)$ where $a_1 = m$ and $a_k > N$. 
\end{theorem}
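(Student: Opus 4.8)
The plan is to argue by induction on $k$, with base case $k=4$; in fact the construction will let one take $m(k)$ to be an absolute constant (any $m\ge 3$), independent of $k$. It is convenient to carry a slightly stronger statement through the induction: for every $k\ge 4$ and every $N$ there is a $k$-flip graph $G$ for some sequence $(a_1,\dots,a_k)$ with $a_1=m$, $a_k>N$, and, writing $e_k(v)$ for the number of colour-$k$ edges inside the \emph{open} neighbourhood $N(v)$, one has $e_k(v)\ge\tau$ for all $v$, where $\tau=\tau(k)$ is a prescribed constant. The extra clause is exactly what makes the inductive step go through.

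\emph{Inductive step $k-1\to k$ for $k\ge 5$.} Apply the hypothesis to get a $(k-1)$-flip graph $G'$ for $(a_1,\dots,a_{k-1})$ with $a_1=m$, $a_{k-1}>N$, and $e_{k-1}^{G'}(v)\ge\tau'$ for all $v$, where $\tau'$ is taken large compared to $\tau$. Set $a_k:=a_{k-1}+1$, let $G_k$ be an $a_k$-regular graph in which every vertex lies in exactly $\tau$ triangles (easily produced, e.g. as a suitable Cayley graph of $\mathbb{Z}_n$), and put $G=G'\mathbin{\square}G_k$; colour each edge of $G$ that changes the first coordinate by the colour of the underlying $G'$-edge and colour the rest by the new colour $k$. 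Examining $N_G[(u,v)]$ gives $e_i^G[(u,v)]=e_i^{G'}[u]$ for $i\le k-1$ and $e_k^G[(u,v)]=a_k+\tau$, so the first $k-1$ flip inequalities are those of $G'$, while the last, $e_{k-1}^{G'}[u]>a_k+\tau$, follows from $e_{k-1}^{G'}[u]=a_{k-1}+e_{k-1}^{G'}(u)\ge a_{k-1}+\tau'>a_k+\tau$. The colour degrees are $(a_1,\dots,a_{k-1},a_{k-1}+1)$, still increasing with $a_1=m$ and last term $>N$, and $e_k^G((u,v))=e_{G_k}(v)=\tau$, so the stronger statement persists (its constant drops by a bounded amount at each step, so $\tau(k)$ may be taken linear in $k$).

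\emph{Base case $k=4$.} Here I would use the lexicographic product $G=G'[K_{m+1}]$, where $G'$ is a $3$-flip graph for a sequence $(b_1,b_2,b_3)$ chosen below: colour all edges inside the $K_{m+1}$-blobs with colour $1$, and colour each edge joining the blobs over a $G'$-edge $ww'$ by the colour of $ww'$. Then $a_1=m$ and $a_{j+1}=(m+1)b_j$, and with $d'=b_1+b_2+b_3$ one computes $e_1^G[x]=(1+d')\binom{m+1}{2}$ and $e_{j+1}^G[x]=(m+1)^2\,e_{b_j}^{G'}[u]$. Hence $G$ is a $4$-flip graph precisely when $G'$ is a $3$-flip graph and, for every $u$, $e_{b_1}^{G'}[u]<\tfrac{m}{2(m+1)}(1+d')$. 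Since $e_{b_1}^{G'}[u]>b_3$ and $\tfrac{m}{2(m+1)}(1+d')$ is close to $\tfrac{3m}{2(m+1)}b_3$, this last inequality can be forced only if the three colour degrees are kept nearly equal: take $(b_1,b_2,b_3)=(B-2,B-1,B)$, so that $\tfrac{3m}{2(m+1)}B>B$ whenever $m\ge 3$, leaving a window of width $\Theta(B)$ for $e_{B-2}^{G'}[u]$. The largest colour degree of $G$ is $(m+1)B$, unbounded as $B\to\infty$ (the quadratic obstruction $b_3<2b_1^2$ of Theorem \ref{3colour_bound} is immaterial since $B<2(B-2)^2$), and by letting colour $b_3$ of $G'$ carry the needed constant number of triangles at each vertex one also secures $e_4^G(x)\ge\tau$. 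So $k=4$ reduces to a single $3$-colour construction.

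\emph{The main obstacle.} Everything comes down to building, for arbitrarily large $B$, a $3$-flip graph for $(B-2,B-1,B)$ whose open-neighbourhood colour counts $e_{B-2}^{G'}(u)>e_{B-1}^{G'}(u)>e_{B}^{G'}(u)$ are all small (dropping by at least $2$ at each step), with $e_B^{G'}(u)$ bounded below by a constant and $e_{B-2}^{G'}(u)=o(B)$, uniformly in $u$. The tension is intrinsic: the colour degrees are large, which on average forces many monochromatic edges inside neighbourhoods, yet the local counts must stay tiny, so the construction has to be "efficient", spending almost all monochromatic edges outside neighbourhoods. I would attack this with a Cayley graph $\cay{\mathbb{Z}_n}{S}$, $S=S_1\sqcup S_2\sqcup S_3$ symmetric with $|S_j|\approx B$, coloured by class; vertex-transitivity collapses every condition to the origin, where $e_{b_j}^{G'}[0]=b_j+\tfrac12\bigl|\{(s,t)\in S\times S:\ t-s\in S_j\}\bigr|$. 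Choosing each $S_j$ to be a Sidon-type set in generic position keeps the additive quantity small, and a small engineered cluster placed inside one class then supplies exactly the handful of triangles required. Verifying that such $S_j$ exist in $\mathbb{Z}_n$ for a suitable $n$, with every count landing on its prescribed value uniformly, is the technical heart of the argument.
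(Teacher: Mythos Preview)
This theorem is not proved in the present paper at all: it is quoted verbatim as Theorem~4.3 of \cite{caro2023flip} and used as a black box (in particular, the case $q=1$ of Theorem~\ref{qKLowerBound} is disposed of by citing it). So there is no ``paper's own proof'' to compare your attempt against; your write-up has to stand on its own.

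Your inductive step $k-1\to k$ via the Cartesian product is clean and correct: the identities from Lemma~\ref{CPColourLemma} give exactly the colour-degree and closed-neighbourhood counts you state, and tracking the auxiliary invariant $e_{k}(v)\ge\tau$ through the product is the right device. The base case, however, is not finished. You reduce $k=4$ to the existence, for arbitrarily large $B$, of a $3$-flip graph $G'$ for $(B-2,B-1,B)$ in which every vertex $u$ satisfies
\[
e_3^{G'}(u)+2\ \le\ e_2^{G'}(u)\ \le\ e_1^{G'}(u)-2
\quad\text{and}\quad
e_1^{G'}(u)\ <\ \tfrac{B+10}{8}\ \ (\text{for }m=3),
\]
and then stop, calling the Cayley/Sidon construction ``the technical heart of the argument'' without carrying it out. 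That is the gap. Note in particular that an entirely sum-free connecting set gives $e_j^{G'}(u)=0$ for all $j$, which makes the closed-neighbourhood counts go the \emph{wrong} way; you genuinely need to engineer a handful of colour-$1$ and colour-$2$ triangles, in the right proportions, uniformly at every vertex, while keeping all three counts below $\tfrac{B}{8}+O(1)$. This is plausible but not automatic, and nothing in the present paper supplies it: Theorem~\ref{flippingIntervalCorr} yields $(B-2,B-1,B)$-flip graphs with $e_j[v]-e_{j+1}[v]=1$, but gives no control on the absolute size of $e_1[v]$, which is exactly what your lexicographic-product inequality $e_1^{G'}[u]<\tfrac{m}{2(m+1)}(1+d')$ demands. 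Until that construction is written down and checked, the base case --- and hence the whole induction --- remains open.
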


Let $q(k)$ be the largest integer satisfying $q(k) < k$ such that there exists some $h(k) \in \mathbb{N}$ and for all $N \in \mathbb{N}$, there is a $k$-flip sequence $(a_1, \dots, a_k)$ where $a_{q(k)} = h(k)$ and $a_k > N$. In other words, $q(k)$ is the largest index in a $k$-flip sequence such that $a_{q(k)}$ can be some fixed value $h(k)$ but $q_k$ can be arbitrarily large. By Theorem \ref{arbitraryGapsThm_h_1} we establish that $q(k) \geq 1$ for all $k > 3$. 

Our contribution shall be that $$\max\left\{1, \left\lceil\frac{k}{4}\right\rceil - 1\right\} \leq q(k) < \begin{cases}
 	\ \frac{k}{3} & \mbox{if $k \equiv 0 \ (\!\!\!\!\!\!\mod 3)$} \\
 	\left\lceil \frac{k}{2}\right\rceil & \mbox{otherwise}
 \end{cases}$$ for $k > 3$.

Two foundational concepts employed heavily throughout this paper are \textit{Cayley graphs} and \textit{sum-free} sets, which we introduce next. 

Let $\Gamma$ be a group. We shall denote the identity of $\Gamma$ by $1_\Gamma$. All groups considered are assumed to be finite. We use the standard notation $\mathbb{Z}_n$ for the group of integers under addition modulo $n$.

Let $S$ be a subset of $\Gamma$ such that $S$ is inverse-closed and does not contain the identity. The \textit{Cayley graph} $\cay{\Gamma}{S}$ has vertex set $\Gamma$ and edge set $\left\{\{g, gs\}\colon s \in S, g \in \Gamma\right\}$. The set $S$ is termed as the \textit{connecting set}.
 
Let $\Gamma$ be an Abelian group and let $A, B \subseteq \Gamma$. The sum-set $A+B$ is the set $\{a+b\colon a \in A, b \in B\}$. By $2A$ we denote the set $A + A$ whilst by $A^{-1}$ we denote the set of inverses of $A$. We say that $A$ is sum-free if $2A \cap A = \emptyset$. 

In Section 2 we shall outline a number of properties of products and packings of edge-coloured graphs. Section 3 shall be dedicated to improving the upper bound on $h(b,r)$, while Section 4 presents lower and upper bounds on $q(k)$ for $k \geq 4$. In Section 5 we give some concluding remarks and further open problems to those in \cite{caro2023flip}.
\section{New edge-coloured graphs from old}

In this section we will briefly outline the toolset required, looking at a number of graph operations and how they affect edge-colourings. Namely we will consider the Cartesian and strong products of graphs, as well as the packing of graphs.

\subsection{Products of edge-coloured graphs}

We begin by recalling the definition of the strong product of two graphs $G$ and $H$, and in particular how an edge-colouring of $G \boxtimes H$ is inherited from edge-colourings of its factors. 

\begin{definition}[Strong product] The strong product $G \boxtimes H$ of two graphs $G$ and $H$ is the graph such that $V\left(G \boxtimes H\right) = V(G) \times V(H)$ and there is an edge $\left\{(u, v), (u^\prime, v^\prime)\right\}$ in $G \boxtimes H$ if, and only if, either $u = u^\prime$ and $v \sim v^\prime$ in $H$, or $v = v^\prime$ and $u \sim u^\prime$ in $G$, or $u \sim u^\prime$ in $G$ and $v \sim v^\prime$ in $H$.
\end{definition}

We extend the edge-colourings of $G$ and $H$ to an edge-colouring of $G \boxtimes H$ as follows. Consider the edge $e = \left\{(u, v), (u^\prime, v^\prime)\right\}$ in $G \boxtimes H$; if $u = u^\prime$ then $e$ inherits the colouring of the edge $\{v, v^\prime\}$ in $H$, otherwise if $u \neq u^\prime$ the colouring of the edge $\{v, v^\prime\}$ in $G$ is inherited. This colouring inheritance is illustrated in Figure \ref{SPDiagram}, with its properties summarised in Lemma \ref{SPColourLemma}.

\begin{lemma}\label{SPColourLemma}
	Let $G$ and $H$ be edge-coloured from $\{1, \dots, k\}$. Then in the coloured strong product $G \boxtimes H$, for any $1 \leq j \leq k$ and $(u, v) \in V(G \boxtimes H)$,
	\begin{enumerate}[i.]
		\item $\deg_j \big((u,v)\big) = \deg_j^H (v) + \deg_j^G (u) \left(1 + \deg^H (v)\right)$,
		\item $e_j \big[(u, v)\big] = e_j^H [v] \left(1 + \deg^G (u)\right) + e_j^G [u] \left(1 + \deg^H (v) + 2\sum\limits_{i = 1}^k e_i^H [v]\right)$.
	\end{enumerate}
\end{lemma}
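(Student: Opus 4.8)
The plan is to reduce both formulae to a direct classification of the edges of $G \boxtimes H$ lying in the relevant neighbourhood, according to which of the two coordinates is fixed along each edge. The key structural observation for part (ii) is that $N^{G\boxtimes H}[(u,v)] = N^G[u] \times N^H[v]$, and that the subgraph of $G \boxtimes H$ induced on a product set $A \times B$ is exactly $G[A] \boxtimes H[B]$, with the colour inheritance of the product respecting the inheritance of the induced factors. Writing $G' = G[N^G[u]]$ and $H' = H[N^H[v]]$, so that $|V(G')| = 1 + \deg^G(u)$, $|V(H')| = 1 + \deg^H(v)$, $e^{G'}_j(V(G')) = e^G_j[u]$ and $e^{H'}_j(V(H')) = e^H_j[v]$, it then suffices to count, for each colour $j$, the edges of $G' \boxtimes H'$.

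For part (i) I would partition $N^{G\boxtimes H}((u,v))$ into the $\deg^H(v)$ vertices $(u, v')$ with $v' \sim v$, the $\deg^G(u)$ vertices $(u', v)$ with $u' \sim u$, and the $\deg^G(u)\,\deg^H(v)$ vertices $(u', v')$ with $u' \sim u$ and $v' \sim v$. By the colour-inheritance rule the first family contributes edges coloured as $\{v,v'\}$ in $H$, giving $\deg_j^H(v)$ edges of colour $j$; the second and third families, having a changing $G$-coordinate, contribute edges coloured as $\{u,u'\}$ in $G$, giving $\deg_j^G(u)$ and $\deg_j^G(u)\,\deg^H(v)$ edges of colour $j$ respectively. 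Summing yields $\deg_j^H(v) + \deg_j^G(u)\big(1 + \deg^H(v)\big)$.

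For part (ii) I would classify the edges of $G' \boxtimes H'$ into three types: \emph{vertical} edges $\{(a,b),(a,b')\}$ with $a \in V(G')$ and $\{b,b'\} \in E(H')$; \emph{horizontal} edges $\{(a,b),(a',b)\}$ with $b \in V(H')$ and $\{a,a'\} \in E(G')$; and \emph{diagonal} edges $\{(a,b),(a',b')\}$ with $\{a,a'\} \in E(G')$ and $\{b,b'\} \in E(H')$. Vertical edges inherit their colour from $H$, so the number coloured $j$ is $|V(G')| \cdot e^{H'}_j(V(H')) = (1 + \deg^G(u))\,e^H_j[v]$. Horizontal and diagonal edges both have a changing $G$-coordinate, hence inherit their colour from the corresponding $G'$-edge; the horizontal ones number $|V(H')| \cdot e^{G'}_j(V(G')) = (1 + \deg^H(v))\,e^G_j[u]$. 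The delicate point is the diagonal count: each pair consisting of an edge $\{a,a'\} \in E(G')$ and an edge $\{b,b'\} \in E(H')$ yields \emph{two} diagonal edges, $\{(a,b),(a',b')\}$ and $\{(a,b'),(a',b)\}$, both of the colour of $\{a,a'\}$; hence the number of diagonal edges of colour $j$ is $2\,|E(H')| \cdot e^{G'}_j(V(G')) = 2\,e^G_j[u] \sum_{i=1}^k e^H_i[v]$, using $|E(H')| = \sum_{i=1}^k e^{H'}_i(V(H'))$. Adding the three contributions gives $e^H_j[v](1 + \deg^G(u)) + e^G_j[u]\big(1 + \deg^H(v) + 2\sum_{i=1}^k e^H_i[v]\big)$, as claimed.

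I expect the only real pitfall to be the bookkeeping in the diagonal case — correctly pairing up unordered edges without double-counting, which is precisely what produces the factor $2$ and the sum $\sum_{i=1}^k e^H_i[v]$ — together with the routine but necessary check that colour inheritance commutes with passing to induced subgraphs, so that the induced neighbourhood really is a coloured strong product of smaller coloured graphs.
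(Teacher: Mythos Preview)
Your argument is correct. The paper itself does not supply a proof of this lemma: it is stated and illustrated by a figure, with the verification left implicit. Your classification of edges in $G'\boxtimes H'$ into vertical, horizontal, and diagonal types, together with the observation $N^{G\boxtimes H}[(u,v)]=N^G[u]\times N^H[v]$ and the factor-of-$2$ bookkeeping for the diagonals, is exactly the routine check the paper omits.
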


\begin{figure}[h!]
\centering
\includegraphics[width=0.45\textwidth]{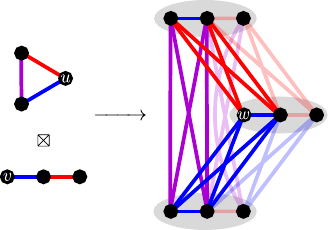}	
\caption{Illustration of Lemma \ref{SPColourLemma}, with the closed neighbourhood of $w = (u, v)$ in $K_3 \boxtimes P_3$ highlighted.}
\label{SPDiagram}
\end{figure}

Another useful product shall be Cartesian product, which we recall below.

\begin{definition}[Cartesian product] The Cartesian product $G \ \square \ H$ of the graphs $G$ and $H$ is the graph such that $V\left(G \ \square \ H\right) = V(G) \times V(H)$ and there is an edge $\left\{(u, v), (u^\prime, v^\prime)\right\}$ in $G \ \square \ H$ if, and only if, either $u = u^\prime$ and $v \sim v^\prime$ in $H$, or $v = v^\prime$ and $u \sim u^\prime$ in $G$.
\end{definition}

We extend the edge-colourings of $G$ and $H$ to an edge-colouring of $G \ \square \ H$ as follows. Consider the edge $e = \left\{(u, v), (u^\prime, v^\prime)\right\}$ in $G \ \square \ H$; if $u = u^\prime$ then $e$ inherits the colouring of the edge $\{v, v^\prime\}$ in $H$, otherwise if $v = v^\prime$ the colouring of the edge $\{u, u^\prime\}$ in $G$ is inherited. This colouring inheritance is illustrated in Figure \ref{CPDiagram}, with its properties summarised in Lemma \ref{CPColourLemma}.

\begin{figure}[h!]
\centering
\includegraphics[width=0.45\textwidth]{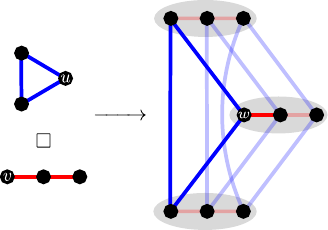}
\caption{Illustration of Lemma \ref{CPColourLemma}, with the closed neighbourhood of $w = (u, v)$ in $K_3 \ \square \ P_3$ highlighted.}
\label{CPDiagram}
\end{figure}

\begin{lemma}\label{CPColourLemma}
	Let $G$ and $H$ be edge-coloured from $\{1, \dots, k\}$. Then in the coloured Cartesian product $G \ \square \ H$, for any $1 \leq j \leq k$ and $(u, v) \in V(G \ \square \ H)$,
	\begin{enumerate}[i.]
		\item $\deg_j \big((u,v)\big) = \deg_j^G (u) + \deg_j^H (v)$,
		\item $e_j\left[(u,v)\right] = e_j^G[u] + e_j^H[v]$.
	\end{enumerate}
\end{lemma}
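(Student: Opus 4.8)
The plan is to prove both identities directly, by describing the closed neighbourhood of a vertex $(u,v)$ in $G \ \square \ H$ and exploiting the fact that, unlike the strong product, the Cartesian product contains no ``diagonal'' edges.

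First I would record that, by definition of the Cartesian product, the neighbours of $(u,v)$ are precisely the vertices $(u,v')$ with $v' \in N^H(v)$ together with the vertices $(u',v)$ with $u' \in N^G(u)$, and that no neighbour differs from $(u,v)$ in both coordinates. For part (i) this means every edge at $(u,v)$ is of one of two kinds: an edge $\{(u,v),(u,v')\}$, which by the inheritance rule carries the colour of $\{v,v'\}$ in $H$, or an edge $\{(u,v),(u',v)\}$, which carries the colour of $\{u,u'\}$ in $G$. Counting those coloured $j$ yields $\deg_j^H(v)$ of the former and $\deg_j^G(u)$ of the latter, giving $\deg_j\big((u,v)\big) = \deg_j^G(u) + \deg_j^H(v)$.

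For part (ii) I would show that the subgraph induced on $N[(u,v)]$ is the edge-disjoint union of a ``column'' part, induced on $\{u\} \times N^H[v]$, and a ``row'' part, induced on $N^G[u] \times \{v\}$, with the two parts meeting only in $(u,v)$. The crucial observation is that any edge of $G \ \square \ H$ joins vertices that agree in one coordinate, so a vertex $(u,v')$ and a vertex $(u'',v)$ with $u'' \neq u$ and $v' \neq v$ are never adjacent; hence no edge of the induced subgraph crosses between the two parts. Since $w \mapsto (u,w)$ is a colour-preserving isomorphism from $H[N^H[v]]$ onto the column part, that part contributes exactly $e_j^H[v]$ edges of colour $j$, and symmetrically the row part contributes $e_j^G[u]$; adding these gives $e_j[(u,v)] = e_j^G[u] + e_j^H[v]$.

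The argument is essentially bookkeeping, so I do not anticipate a genuine obstacle; the one point that needs care --- and the reason this formula is far simpler than its strong-product analogue in Lemma \ref{SPColourLemma} --- is verifying that $N[(u,v)]$ contains no edges beyond those lying wholly in the row or wholly in the column, i.e.\ that the row and column parts are genuinely edge-disjoint and share only the centre vertex. Once that is in place, both identities follow immediately.
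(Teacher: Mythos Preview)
Your argument is correct: the description of $N[(u,v)]$ as the union of the ``row'' $N^G[u]\times\{v\}$ and the ``column'' $\{u\}\times N^H[v]$, meeting only at $(u,v)$ and with no edges between them, is exactly right, and both identities follow. The paper itself does not supply a proof of this lemma --- it is stated and illustrated by Figure~\ref{CPDiagram} but treated as self-evident --- so your write-up is more detailed than what appears there.
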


\subsection{Packing of edge-coloured graphs}

In this section we shall consider, in particular, the packing of edge-coloured Cayley graphs. We first formally define graph packing.

\begin{definition}[Packing]
	Two graphs $G$ and $H$ are said to pack if there exists bijections $g \colon V(G) \rightarrow \{1, \dots, n\}$ and $h \colon V(H) \rightarrow \{1, \dots, n\}$ such that the images of $E(G)$ and $E(H)$ under $g$ and $h$, respectively, do not intersect. 
	
	The packing of $G$ and $H$ is the graph with vertex set $\{1, \dots, n\}$ and edge set being union of the images of $E(G)$ and $E(H)$ under $g$ and $h$, respectively.
\end{definition}

An edge-colouring is naturally inherited by a packing of $G$ and $H$, by keeping the original colour of every single edge. Note that by the definition of a packing, the edge-colouring is well-defined.

Counting the colour-degree of every vertex in a packing of $G$ and $H$, in terms of the colour-degrees of $G$ and $H$, is straight-forward, however counting the coloured closed neighbourhood sizes is more difficult. In certain cases, such as when $G$ and $H$ are monochromatically coloured Cayley graphs on an Abelian group, we can do such counting. This is summarised in Proposition \ref{cayley_union_counting}. 

Let $S$ and $T$ be two disjoint inverse-closed subsets of $\Gamma$ not containing $1_\Gamma$. Consider the two Cayley graphs $G = \cay{\Gamma}{S}$ and $H = \cay{\Gamma}{T}$. Then the Cayley graph $\cay{\Gamma}{S \cup T}$ is a packing of $G$ and $H$. Cayley graphs enjoy a number of properties, most notably that they are vertex-transitive.

\begin{proposition}\label{cayley_union_counting}
	Let $\Gamma$ be an Abelian group and let $R, B$ be disjoint inverse-closed subsets of $\Gamma$ which do not contain $1_\Gamma$. Let $G = \cay{\Gamma}{B}$ and $H = \cay{\Gamma}{R}$ be monochromatically edge-coloured using colours $1$ and $2$, respectively. Then in $\cay{\Gamma}{B \cup R}$, for $v \in \Gamma$,
	\begin{enumerate}[i.]
		\item $\deg_1 (v) = \deg^G (v)$ and  $\deg_2 (v) = \deg^H (v)$,
		\item $e_1 [v] - e_2 [v] = \left(e_1^G [v] - e_2^H [v]\right) + \left(e_2^H \left(N^G (v)\right) - e_1^G \left(N^H (v)\right)\right)$, 
		\item furthermore, if $(R + B) \cap R = \emptyset$ and $e_1^G [v] > e_2^H [v]$, then $e_1 [v] > e_2 [v]$.
	\end{enumerate}
\end{proposition}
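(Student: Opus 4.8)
The plan is to work directly in the packing graph $\cay{\Gamma}{B \cup R}$, exploiting vertex-transitivity of Cayley graphs to reduce everything to counting at the identity $1_\Gamma$, and then translating by an arbitrary $v$.

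\medskip

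\noindent\textbf{Part (i).} This is immediate: since $B$ and $R$ are disjoint, the edge set of $\cay{\Gamma}{B\cup R}$ is the disjoint union of $E(G)$ and $E(H)$, and an edge $\{g,gs\}$ is coloured $1$ precisely when $s\in B$ and coloured $2$ precisely when $s\in R$. So $\deg_1(v)$ counts neighbours via $B$, which is exactly $\deg^G(v)=|B|$, and similarly $\deg_2(v)=\deg^H(v)=|R|$. I would state this in one line.

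\medskip

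\noindent\textbf{Part (ii).} First I would fix $v=1_\Gamma$ without loss of generality, by vertex-transitivity (the translation $g\mapsto g v$ is a colour-preserving automorphism of $\cay{\Gamma}{B\cup R}$ carrying $N[1_\Gamma]$ to $N[v]$ and preserving edge colours). Now $N[1_\Gamma]=\{1_\Gamma\}\cup B\cup R$ (as a set; note $B\cap R=\emptyset$ but there is no guarantee the union with $\{1_\Gamma\}$ behaves like a disjoint union in terms of which edges appear — it does, since $1_\Gamma\notin B\cup R$). The colour-$1$ edges inside $N[1_\Gamma]$ are those pairs $\{x,y\}\subseteq N[1_\Gamma]$ with $x-y\in B$; the colour-$2$ edges are those with $x-y\in R$. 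I would split $N[1_\Gamma]$ into the three blocks $\{1_\Gamma\}$, $B$, $R$ and count colour-$1$ and colour-$2$ edges landing in each block-pair. Observe that $e_1^G[1_\Gamma]=e_1^G(\{1_\Gamma\}\cup B)$ counts colour-$1$ edges with both endpoints in $\{1_\Gamma\}\cup B$, and $e_2^H[1_\Gamma]=e_2^H(\{1_\Gamma\}\cup R)$ counts colour-$2$ edges with both endpoints in $\{1_\Gamma\}\cup R$. The remaining colour-$1$ edges in $N[1_\Gamma]$ are exactly those with both endpoints in $R$ (colour-$1$ edges from $R$ to $\{1_\Gamma\}$ are impossible since $r\in R$ would force $r\in B$; colour-$1$ edges from $R$ to $B$ do not lie inside $R$ but I must be careful — actually a colour-$1$ edge has a $B$-difference, so an edge between $x\in R$ and $y\in B$ with $x-y\in B$ has both endpoints in $N[1_\Gamma]$ and is colour $1$, so it must be accounted for). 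Here is the cleaner bookkeeping: $e_1[1_\Gamma]=e_1^G[1_\Gamma]+(\text{colour-}1\text{ edges of }N[1_\Gamma]\text{ not already in }\{1_\Gamma\}\cup B)$, and the latter is precisely the number of colour-$1$ edges among vertices of $N^H[1_\Gamma]=\{1_\Gamma\}\cup R$, i.e.\ $e_1^G(N^H[1_\Gamma])$ (minus the double count of $\{1_\Gamma\}$, which contributes nothing since $1_\Gamma$ has no colour-$1$ loop and $e_1^G[1_\Gamma]$ already includes all colour-$1$ edges incident to $1_\Gamma$ that land in $\{1_\Gamma\}\cup B$ — the subtle point being colour-$1$ edges from $1_\Gamma$ to $R$, which don't exist). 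Symmetrically $e_2[1_\Gamma]=e_2^H[1_\Gamma]+e_2^H(N^G[1_\Gamma])-(\text{overlap at }1_\Gamma)$. Subtracting and checking that the overlap terms cancel yields $e_1[1_\Gamma]-e_2[1_\Gamma]=\big(e_1^G[1_\Gamma]-e_2^H[1_\Gamma]\big)+\big(e_1^G(N^H[1_\Gamma])-e_2^H(N^G[1_\Gamma])\big)$, which is the claimed identity after translating back to $v$. The main obstacle here is being scrupulous about which cross-block edges (between $B$ and $R$, and between $\{1_\Gamma\}$ and $B$ or $R$) get counted where and verifying nothing is double-counted or dropped; I would organise this with a small table of block-pairs $\times$ colours.

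\medskip

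\noindent\textbf{Part (iii).} Under the hypothesis $(R+B)\cap R=\emptyset$, I claim $e_1^G(N^H[v])=0$, i.e.\ there are no colour-$1$ edges among the vertices of $N^H[v]$. Again reduce to $v=1_\Gamma$, so $N^H[1_\Gamma]=\{1_\Gamma\}\cup R$. A colour-$1$ edge inside this set is a pair $\{x,y\}$ with $x-y\in B$. If one endpoint is $1_\Gamma$ and the other is $r\in R$, then $r\in B$, contradicting $B\cap R=\emptyset$. If both endpoints are $r,r'\in R$ with $r-r'\in B$, then $r=r'+b$ for some $b\in B$, so $r\in R+B$; but $r\in R$, contradicting $(R+B)\cap R=\emptyset$. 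Hence $e_1^G(N^H[1_\Gamma])=0$, and so by (ii), $e_1[v]-e_2[v]=\big(e_1^G[v]-e_2^H[v]\big)+e_2^H(N^G[v])\ge e_1^G[v]-e_2^H[v]>0$, since $e_2^H(N^G[v])\ge 0$ and $e_1^G[v]>e_2^H[v]$ by assumption. This gives $e_1[v]>e_2[v]$ as required. (If $B$ is also sum-free in a way that interacts, one could say more, but it is not needed.)
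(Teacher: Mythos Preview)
Your treatment of (i) is fine, and your argument for (iii) is correct \emph{provided} (ii) holds as stated in the proposition. The real problem is your derivation of (ii).

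In your ``cleaner bookkeeping'' you assert that the colour-$1$ edges of $N[1_\Gamma]$ not already lying inside $\{1_\Gamma\}\cup B$ are precisely the colour-$1$ edges inside $N^H[1_\Gamma]=\{1_\Gamma\}\cup R$, i.e.\ $e_1^G(N^H[1_\Gamma])$. This is false: any colour-$1$ edge $\{b,r\}$ with $b\in B$, $r\in R$, $b-r\in B$ lies in $N[1_\Gamma]$ but has neither both endpoints in $\{1_\Gamma\}\cup B$ nor both endpoints in $\{1_\Gamma\}\cup R$, so your decomposition drops it entirely. You even flag exactly this danger two sentences earlier, and then lose the cross edges anyway. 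The upshot is that your formula comes out as
\[
e_1[1_\Gamma]-e_2[1_\Gamma]=(e_1^G[1_\Gamma]-e_2^H[1_\Gamma])+\big(e_1^G(N^H[1_\Gamma])-e_2^H(N^G[1_\Gamma])\big),
\]
which has the second bracket with the \emph{opposite sign} to the proposition. So it is not ``the claimed identity'', and if you fed your own version of (ii) into your proof of (iii) you would get $e_1[v]-e_2[v]=(e_1^G[v]-e_2^H[v])-e_2^H(N^G[v])$, which need not be positive.

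What is missing is exactly the step where the Abelian hypothesis does real work (notice you never invoke it). The paper counts the colour-$1$ cross edges between $B$ and $R$ directly and shows there are $2\,e_2^H(B)$ of them: given a colour-$1$ edge $\{u,v\}$ with $u\in B$, $v\in R$, write $u=x+v$ with $x=u-v\in B$; then $\{u,x\}$ is a colour-$2$ edge inside $B$, and commutativity gives that $\{x,-v\}$ is a second colour-$1$ cross edge attached to the same colour-$2$ edge. This $2$-to-$1$ correspondence yields
\[
e_1[1_\Gamma]=e_1^G[1_\Gamma]+e_1^G(R)+2\,e_2^H(B),\qquad
e_2[1_\Gamma]=e_2^H[1_\Gamma]+e_2^H(B)+2\,e_1^G(R),
\]
and subtracting now produces the correct sign. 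Your block-table idea is perfectly viable, but the $(B,R)$-cell of that table is the whole point of the proposition and cannot be skipped.
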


\begin{proof}
It suffices to consider a single vertex, say $1_\Gamma$, by virtue of the vertex-transitivity of Cayley graphs. Note that $B = N^G (1_\Gamma)$ and $R = N^H (1_\Gamma)$. More so, since $R$ and $B$ are disjoint, the edge-colouring of the union is well-defined and $N(1_\Gamma) = B \ \dot\cup \ R$. Clearly all the edges incident to $1_\Gamma$ are incident to $1_\Gamma$ in either $G$ or $H$, and therefore $\deg_1 (v) = \deg^G (v)$ and $\deg_2 (v) = \deg^H (v)$. We now count the number of edges coloured $1$ in the subgraph induced by $N(1_\Gamma)$. We have three cases for an edge $\{u, v\}$ coloured 1, 
\begin{enumerate}[i.]
\item Both $u$ and $v$ are in $B$, of which there are $e_1^G (B)$ such edges.
\item Both $u$ and $v$ are in $R$, of which there are $e_1^G \left(R\right)$ such edges.
\item The vertex $u$ is in $B$ and the vertex $v$ is in $R$. We show that the number of such edges is $2 e_2^H \left(B\right)$, \textit{i.e}. twice the number of edges coloured $2$ amongst the neighbours of $1_\Gamma$ in $G$.
\end{enumerate}

Since $\{u, v\}$ is coloured $1$ then it is an edge in the Cayley graph $G$. Therefore there is some $x \in B$ such that $u = xv$. Since $v^{-1} \in R$, then $\{u, x\}$ is an edge in $H$. Hence $x = u v^{-1}$ and since $\Gamma$ is Abelian and $u \in B$, $\{x, v^{-1}\}$ is an edge in $G$.

In other words, for every edge $\{u, x\}$ in $H$, where $u, x \in B$ and $u = xv$ for some $v \in R$, there are two edges $\{u, v\}$ and $\{x, v^{-1}\}$ in $G$ with one vertex in $R$ and one vertex in $B$. This counting argument is illustrated in Figure \ref{cayley_union_counting_fig}.

Hence, $e_1 [1_\Gamma] = e_1^G [1_\Gamma] + e_1^G \left(R\right) + 2 e_2^H \left(B\right)$. Repeating the argument for $e_2 [1_\Gamma]$ and subtracting, we get (ii) as required.

\begin{figure}[t!]
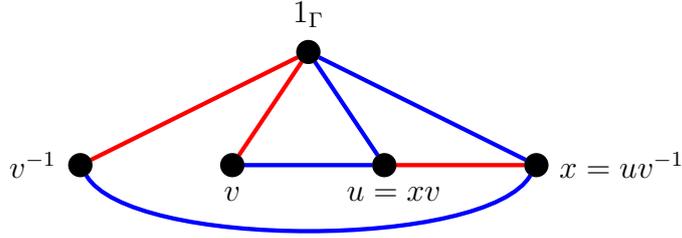

\centering
\ctikzfig{figures/cayley_union_colouring_counting}
\caption{Illustration of the counting argument in the proof of Proposition \ref{cayley_union_counting}, where the red edge $\{u, x\}$ between two blue neighbours of $1_\Gamma$ corresponds to two blue edges, each incident to a blue and red neighbour of $1_\Gamma$.}
\label{cayley_union_counting_fig}
\end{figure}

Now, suppose that $(R + B) \cap R = \emptyset$ and $e_1^G [v] > e_2^H [v]$. Then given any $u \in R$ and $v \in B$, $uv \notin R$ and therefore $\{u, uv\}$ is not an edge in the subgraph of $\cay{\Gamma}{B \cup R}$ induced by $R$. In other words, this subgraph has no edges coloured $1$ and therefore $e^G_1 \left(R\right) = 0$. Therefore (iii) follows from (ii). 
\end{proof}

\section{Bounding $h(b, r)$ through Cayley flip graphs}

If we can construct Cayley graphs satisfying the conditions in Proposition \ref{cayley_union_counting} (iii) and $|B| < |R|$, we can then construct a $\left(|B|, |R|\right)$-flip graph (which in particular turns out to be another Cayley graph). Note that the requirement $e_1 [v] > e_2 [v]$ necessitates that $|B| \geq 3$. Moreover, from the proof of Proposition \ref{cayley_union_counting} (iii) we have that in this case, $$\binom{|B| + 1}{2} \geq e_1^G [v] + e_2^H \left(N^G [v]\right) > e_2^H [v] \geq |R|$$ and therefore the constraints of Theorem \ref{RBClassification} are satisfied. In the proof of the following theorem we shall demonstrate not only that such constructions are feasible, but also that they can be done with a small number of vertices.

\begin{theorem}\label{newBound}
Let $b, r \in \mathbb{N}$ such that $4 \leq b < r < b + 2 \left\lfloor\frac{b+2}{6}\right\rfloor^2$. Then,
$$h(b, r) \leq 8 \lambda_{b,r}\left(2 + \bigg\lfloor\dfrac{r}{2}\bigg\rfloor + \bigg\lfloor\dfrac{b+2}{2}\bigg\rfloor - 2\bigg\lfloor\dfrac{b+2}{6}\bigg\rfloor\right)$$ where $\lambda_{b, r} = \max\{1, \left(b \mod 2\right) + \left( r \mod 2\right)\}$.
\end{theorem}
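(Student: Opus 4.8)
The plan is to realise the required flip graph as a Cayley graph, exactly along the lines sketched just before the statement. By Proposition~\ref{cayley_union_counting}(iii) it suffices to produce a finite Abelian group $\Gamma$ and disjoint inverse-closed sets $B, R \subseteq \Gamma\setminus\{1_\Gamma\}$ with $|B| = b$, $|R| = r$, such that $(R+B)\cap R = \emptyset$ and $e_1^G[1_\Gamma] > e_2^H[1_\Gamma]$ for $G = \cay{\Gamma}{B}$ and $H = \cay{\Gamma}{R}$; then $\cay{\Gamma}{B\cup R}$, coloured $1$ on the $B$-edges and $2$ on the $R$-edges, is a $(b,r)$-flip graph and $h(b,r)\le|\Gamma|$. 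I would always take $R$ to be \emph{sum-free}: then the subgraph induced on $R$ has no colour-$2$ edge, so $e_2^H[1_\Gamma] = |R| = r$, and the remaining requirement is just $e_1^G[1_\Gamma] > r$, i.e.\ (the $b$ edges at $1_\Gamma$ accounting for exactly $b$) the set $B$ must carry more than $r - b$ \emph{internal} edges --- pairs $\{x,y\}\subseteq B$ with $x - y\in B$. Note also that $(R+B)\cap R = \emptyset \iff (R-R)\cap B = \emptyset$, so the geometric content is: $B$ avoids both $R$ and its difference set $R-R$.

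\paragraph{The packing step.} So the task is a packing problem in an Abelian group: fit disjoint $B$ (small, but dense enough to have $>r-b$ internal edges --- say an interval-type set, since an interval of $b+1$ vertices in a circulant already spans $\Theta(b^2)$ internal edges, and $r-b$ is only $O(b^2)$ with a small constant by the hypothesis $r < b + 2\lfloor\tfrac{b+2}{6}\rfloor^2$) and $R$ (sum-free, with a \emph{small} difference set so that its shadow $R\cup(R-R)$ does not crowd out $B$) inside a group of small order. A sum-free $R$ with small difference set wants $R$ to be, up to a fixed multiplier, a coset of a subgroup: working in $\mathbb Z_n\times A$ for a fixed small group $A$ carrying a fixed symmetric sum-free $C\subseteq A$, one may take $R = R'\times C$, which is automatically sum-free (since $2C\cap C=\emptyset$) and, with $R'$ a subgroup-coset, has $R-R$ confined to few cosets; the $|C|$-to-one realisation of $R$ over the cyclic factor is the mechanism behind the coefficient $\lfloor\tfrac r2\rfloor$, and the analogous several-to-one packing of the interval-core of $B$ over the cosets left free by $R\cup(R-R)$, worked out case-by-case in the residue of $b$ modulo $6$, yields $\lfloor\tfrac{b+2}{2}\rfloor - 2\lfloor\tfrac{b+2}{6}\rfloor$. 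The tiniest cases $b\in\{4,5\}$ (with the correspondingly tiny $r$) would just be checked by listing explicit connecting sets.

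\paragraph{Parity and the constant.} Then I would handle parity and collect the constant. A circulant on an odd number of vertices realises only even colour-degrees, so when $b$ or $r$ is odd I need involutions to fix the parities of $|B|$ and $|R|$: inserting a suitable involution of $\Gamma$ into the relevant set and re-checking sum-freeness and the two disjointness conditions (which survive, one extra element contributing only controlled differences) handles it, at the price of at most one extra doubling of $|\Gamma|$ when $b$ and $r$ are \emph{both} odd --- this is the factor $\lambda_{b,r}$. Finally, any residual discrepancy between the pair $(b_0,r_0)$ the base construction directly delivers and the target $(b,r)$ is removed by a bounded number of Cartesian products with fixed tiny graphs --- a monochromatic $K_2$ in colour $1$ or $2$, or a long monochromatic cycle: by Lemma~\ref{CPColourLemma} each such product shifts one colour-degree by a prescribed amount and changes $e_1[v]-e_2[v]$ only by a bounded amount (harmless, since the base graph has margin $e_1[v]-e_2[v]=\Theta(b^2)\gg r$) while at most doubling the order. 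The fixed factor $A$ together with this bounded number of product steps multiplies $|\mathbb Z_n|$ by at most $8$, and the both-odd case by a further $\lambda_{b,r}$; multiplying through by $|\mathbb Z_n| = 2 + \lfloor\tfrac r2\rfloor + \lfloor\tfrac{b+2}{2}\rfloor - 2\lfloor\tfrac{b+2}{6}\rfloor$ gives the stated bound.

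\paragraph{Main obstacle.} The crux --- and the source of the slightly awkward floor expression --- is the packing step: simultaneously getting $R$ sum-free of \emph{exactly} $r$ elements with a small difference set, $B$ of \emph{exactly} $b$ elements in the complement of $R\cup(R-R)$, enough internal edges in $B$ to exceed $r$, and a cyclic factor no larger than claimed, uniformly over the residues of $b$ and $r$. The tension between $B$ being dense (many internal edges) and $B$ being confined (avoiding $R$ and its difference set) inside a near-minimal group is where the real work lies; the parity adjustments and the product padding are comparatively routine once the base construction is in hand.
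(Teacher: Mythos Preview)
Your high-level reduction is exactly right: build a Cayley graph on an Abelian group with a sum-free $R$ (so $e_2^H[1_\Gamma]=r$), a disjoint inverse-closed $B$ with more than $r-b$ internal edges, check $(R+B)\cap R=\emptyset$, and invoke Proposition~\ref{cayley_union_counting}(iii). The parity handling via involutions and the extra $\mathbb Z_2$ factor when both $b,r$ are odd is also what the paper does. What is missing is the actual construction of $B$, and your sketch would not produce one.

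The difficulty is that a symmetrised interval $B_0\cup B_0^{-1}$ with $B_0\subseteq(n/8,n/4)$ is itself sum-free, hence has \emph{zero} internal edges; so ``interval-type $B$'' cannot work, and your proposed product structure $\mathbb Z_n\times A$ with $R=R'\times C$ and subsequent Cartesian-product padding does not lead to the stated bound (nor to any concrete bound, as written). The paper's key idea is different: take a short sub-interval $T_2\subseteq T_0\subseteq(n/8,n/4)$ of length $\lfloor\tfrac{b+2}{6}\rfloor$ and set
\[
B \;=\; T_0\cup T_0^{-1}\cup 2T_2\cup (2T_2)^{-1},
\]
where $2T_2=T_2+T_2$. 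Now every pair $(x,y)\in T_2\times T_2$ gives $x+y\in 2T_2\subseteq B$, so the induced subgraph on $B$ already has at least $2\lfloor\tfrac{b+2}{6}\rfloor^2$ edges --- this is precisely the source of the hypothesis $r<b+2\lfloor\tfrac{b+2}{6}\rfloor^2$ and of the term $\lfloor\tfrac{b+2}{2}\rfloor-2\lfloor\tfrac{b+2}{6}\rfloor$ (the size of $T_0$ needed so that $|B|=b$ after counting $|2T_2|=2|T_2|-1$). The paper then works directly in $\mathbb Z_n$ with $n$ exactly $8$ times the bracketed quantity, placing $R_0$ and $T_0$ side by side in $(n/8,n/4)$; the condition $(R+B)\cap R=\emptyset$ is not automatic and is verified by a separate interval-arithmetic lemma (Lemma~\ref{sumsetLemma}), which also handles the involution $n/2$ provided $\min(T_2)\ge 3n/16$. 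There is no auxiliary group $A$, no case analysis on $b\bmod 6$ beyond the floor arithmetic, and no Cartesian-product padding: the construction hits $(b,r)$ on the nose.
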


A key step in our proof will be the choice of suitably large subsets of $\mathbb{Z}_n$ which are both inverse-closed and sum-free. Sum-free sets in Abelian groups have been studied vastly and are of interest in additive combinatorics and number theory, see \cite{Alon_Kleitman_1990, GreenBen2005Ssia, TaoVan}. We therefore begin with the following useful lemma.

\begin{lemma}\label{sumsetLemma}
	Let $A_0, B_0$ be non-empty disjoint integer intervals of $\left(\frac{n}{8}, \frac{n}{4}\right)$ in $\mathbb{Z}_{n}$, such that $\max(A_0) < \min(B_0)$. Let $B_1 \subseteq B_0$ be an integer interval, $A = A_0 \cup A_0^{-1}$ and $B = B_0 \cup B_0^{-1} \cup 2B_1 \cup 2B_1^{-1}$. Then, $\left(A + B\right) \cap A = \emptyset$. Moreover if $n$ is even, $\left(A + \left\{\frac{n}{2}\right\}\right) \cap A = \emptyset$ and furthermore if $\min(B_1) \geq \frac{3n}{16}$ then $\left(\left\{\frac{n}{2}\right\} + B\right) \cap A = \emptyset$.
\end{lemma}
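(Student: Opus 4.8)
The plan is to work entirely inside $\mathbb{Z}_n$ thought of as real intervals taken modulo $n$. From the hypothesis $A_0,B_0\subseteq\left(\frac n8,\frac n4\right)$ one records at once the location of every set in play: $A_0^{-1},B_0^{-1}\subseteq\left(\frac{3n}4,\frac{7n}8\right)$, $2B_1\subseteq\left(\frac n4,\frac n2\right)$, and $2B_1^{-1}=-2B_1\subseteq\left(\frac n2,\frac{3n}4\right)$. In particular $A=A_0\cup A_0^{-1}$ is contained in the \emph{open} set $\left(\frac n8,\frac n4\right)\cup\left(\frac{3n}4,\frac{7n}8\right)$, and this openness is what makes all the boundary collisions harmless: any sum that happens to equal $0$ or one of $\frac n8,\frac n4,\frac{3n}4,\frac{7n}8$ automatically lies outside $A$. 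I would also note that $A$, $B$, and (when $n$ is even) $\left\{\frac n2\right\}$ are inverse-closed, hence so are $A+B$ and $\left\{\frac n2\right\}+B$; combined with $-A=A$, the map $(a,b)\mapsto(-a,-b)$ lets me reduce the verification of $(A+B)\cap A=\emptyset$ to checking $a+b\notin A$ only for $a\in A_0$ and $b$ in each of the four pieces $B_0,B_0^{-1},2B_1,2B_1^{-1}$ of $B$, and the verification of $\left(\left\{\frac n2\right\}+B\right)\cap A=\emptyset$ to the two pieces $b\in B_0$ and $b\in 2B_1$.

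Three of the four cases of $(A+B)\cap A=\emptyset$ are then pure interval bookkeeping. For $a\in A_0$: $a+B_0\subseteq\left(\frac n4,\frac n2\right)$, $a+B_0^{-1}\subseteq\left(\frac{7n}8,n\right)\cup\left[0,\frac n8\right)$ (using $A_0\cap B_0=\emptyset$ to exclude $a+b=n$, though this is not even needed), and $a+2B_1\subseteq\left(\frac{3n}8,\frac{3n}4\right)$ — each disjoint from $\left(\frac n8,\frac n4\right)\cup\left(\frac{3n}4,\frac{7n}8\right)\supseteq A$. Similarly, for $n$ even one computes $A+\left\{\frac n2\right\}\subseteq\left(\frac n4,\frac{3n}8\right)\cup\left(\frac{5n}8,\frac{3n}4\right)$, disjoint from $A$; and $\frac n2+B_0\subseteq\left(\frac{5n}8,\frac{3n}4\right)$ is disjoint from $A$, while $\frac n2+2B_1$ lands in $\left[\frac{7n}8,n\right)$ — disjoint from the open set containing $A$ — \emph{precisely because} the hypothesis $\min(B_1)\ge\frac{3n}{16}$ forces $2B_1\subseteq\left[\frac{3n}8,\frac n2\right)$. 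So $\min(B_1)\ge\frac{3n}{16}$ is used in exactly this one spot.

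The one case where naive interval arithmetic fails is $a\in A_0$ with $b\in 2B_1^{-1}$, and this is the crux of the lemma. Writing $b=-(b_1+b_1')$ with $b_1,b_1'\in B_1$, the sum $a+b=a-b_1-b_1'$ lies (mod $n$) in $\left(\frac{5n}8,n\right)$, which misses $A_0$ but genuinely overlaps the reflected block $A_0^{-1}\subseteq\left(\frac{3n}4,\frac{7n}8\right)$, so I must rule out $a-b_1-b_1'\in A_0^{-1}$ by a different argument. Rewriting, such a coincidence would give $a+a'=b_1+b_1'$ for some $a'\in A_0$, i.e.\ $2A_0\cap 2B_1\neq\emptyset$. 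But $A_0<B_0$ and $B_1\subseteq B_0$ yield $\max(2A_0)=2\max(A_0)<2\min(B_1)=\min(2B_1)$, and since $2A_0\cup 2B_1\subseteq\left(\frac n4,\frac n2\right)$ sits inside an interval of length $<n$, ordering as residues agrees with ordering of integers; hence $2A_0$ and $2B_1$ are disjoint — a contradiction. This is the only place the hypothesis $A_0<B_0$ (rather than mere disjointness) is needed, and it is the step I expect to be the main obstacle; the remainder is routine arithmetic with half-open intervals, with the observation that $A$ lies in an open set doing the work of absorbing all edge cases.
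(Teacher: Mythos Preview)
Your proof is correct and follows essentially the same interval-arithmetic approach as the paper. The only cosmetic differences are that you invoke the inverse-closed symmetry $(a,b)\mapsto(-a,-b)$ up front to halve the casework, and you phrase the crux case $A_0+2B_1^{-1}$ as the sumset disjointness $2A_0\cap 2B_1=\emptyset$ rather than the paper's equivalent bound $\max(A_0)-2\min(B_0)<-\max(A_0)$; both unwind to $\max(A_0)<\min(B_0)$, which is exactly the hypothesis $A_0<B_0$.
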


\begin{proof}
	Since $A_0$ and $B_0$ are disjoint integer intervals of $\left(\frac{n}{8}, \frac{n}{4}\right)$ such that $\max(A_0) < \min(B_0)$, then there exist integers $m, l, M, L$ such that $\frac{n}{8} < m < l < L < M < \frac{n}{4}$, $A_0 = \left[m, l\right]$ and $B_0 = \left[L, M\right]$. Observe that the sets $A_0$, $B_0$, $2B_1$ and their inverses are all disjoint and, in the case when $n$ is even, none include the involution $\frac{n}{2}$. 
	
	Now, $\min\left(A_0 + 2B_0^{-1}\right) = (m - 2M) \mod n > \max(A_0)$ and $$\max\left(A_0 + 2B_0^{-1}\right) = l - 2L < -l = \min\left(A_0^{-1}\right).$$ Hence $A_0 < A_0 + 2B_0^{-1} < A_0^{-1}$ and therefore since $B_1 \subseteq B_0$, $\left(A_0 + 2B_1^{-1}\right) \cap A = \emptyset$. 
	
	A similar argument follows for every possible summation, in order to obtain that $(A + B) \cap A = \emptyset$, as required.
	
	Now suppose that $n$ is even. Since $A + \left\{\frac{n}{2}\right\} \subseteq \left[\frac{n}{4}, \frac{3n}{8}\right] \cup \left[\frac{5n}{8}, \frac{3n}{4}\right]$ then $\left(A + \left\{\frac{n}{2}\right\}\right) \cap A = \emptyset$ as required.
	
	Also, $\left\{\frac{n}{2}\right\} + B_0 \cup B_0^{-1} \subseteq \left[\frac{n}{4}, \frac{3n}{8}\right] \cup \left[\frac{5n}{8}, \frac{3n}{4}\right]$. Moreover, $\min\left(\left\{\frac{n}{2}\right\} + 2B_1^{-1}\right) > 0$. If $\min(B_1) \geq \frac{3n}{16}$, then $$\max\left(\left\{\frac{n}{2}\right\} + 2B_1^{-1}\right) = \frac{n}{2} - 2\min(B_1) \leq \frac{n}{8}$$ and consequently $\left(\left\{\frac{n}{2}\right\} + 2B_1^{-1}\right) \cap A = \emptyset$. By a similar argument we obtain that $\left(\left\{\frac{n}{2}\right\} + 2B_1\right) \cap A = \emptyset$ and hence $\left(\left\{\frac{n}{2}\right\} + B\right) \cap A = \emptyset$, as required.
\end{proof}

\begin{figure}[t!]
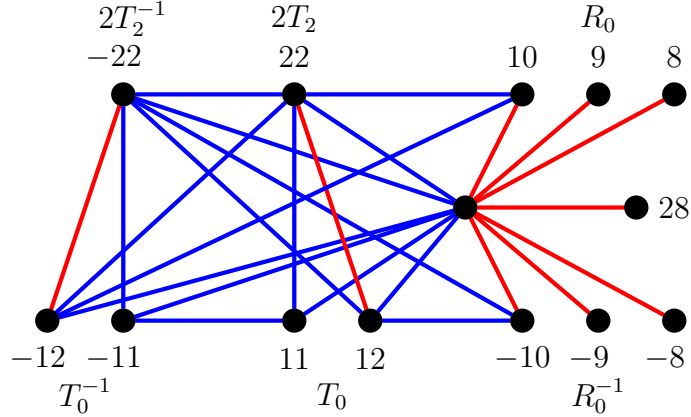

\centering
\ctikzfig{figures/cayley_small_flip_z56_r7_b6}
\caption{Illustration of the closed neighbourhood of $1$ in the Cayley graph construction for $(b, r) = (6, 7)$ and $n = 56$ in the proof of Theorem \ref{newBound}, with the choice of $R_0$, $T_0$ and $T_2$ highlighted.}
\label{cayley_union_counting_fig}
\end{figure}

\begin{proof}[Proof of Theorem \ref{newBound}]
	Let $n \in \mathbb{N}$ such that $n = 8\left(2 + \big\lfloor\frac{r}{2}\big\rfloor + \big\lfloor\frac{b+2}{2}\big\rfloor - 2\big\lfloor\frac{b+2}{6}\big\rfloor\right)$ and consider $\mathbb{Z}_n$, the additive group modulo $n$. By this choice of $n$, the interval $\left(\frac{n}{8}, \frac{n}{4}\right)$ has two disjoint integer intervals $R_0$ and $T_0$ of sizes $\big\lfloor\frac{r}{2}\big\rfloor$ and $\big\lfloor\frac{b+2}{2}\big\rfloor - 2\big\lfloor\frac{b+2}{6}\big\rfloor$ respectively. Choose these intervals such that $\max(R_0) < \min(T_0)$. Now, 
	
	\begin{center}
		$\big\lfloor\frac{b+2}{2}\big\rfloor - 2\big\lfloor\frac{b+2}{6}\big\rfloor = \big\lfloor\frac{b+2}{6}\big\rfloor + \big\lfloor \frac{1}{2}\left(b+2 - 6\big\lfloor\frac{b+2}{6}\big\rfloor\right)\big\rfloor = \big\lfloor\frac{b+2}{6}\big\rfloor + \big\lfloor \frac{(b+2) \!\mod 6}{2}\big\rfloor$ 
	\end{center} and therefore $T_0$ has at least $\big\lfloor\frac{b+2}{6}\big\rfloor$ integers. By our choice of $n$, we can choose $T_0$ such that it has a sub-interval $T_2$ of size $\big\lfloor\frac{b+2}{6}\big\rfloor$ and $\min(T_2) \geq \frac{3n}{16}$.
	
	Define the sets $R_1 = R_0 \ \dot\cup \ R_0^{-1}$ and $T_1 = T_0 \ \dot\cup \ T_0^{-1}$, which are inverse-closed and sum-free. Define $B_1 = T_1 \ \dot\cup \ 2T_2 \ \dot\cup \ 2T_2^{-1}$. Since $T_2$ is an integer interval and $2T_2$ is the sum-set of $T_2$ with itself, then $|2T_2| = 2|T_2| - 1$. Moreover, $|T_2| = |T_2^{-1}|$ and $|T_0| = |T_0^{-1}|$. Therefore, $$|T_1| + |2T_2| + \left|2\left(T_2^{-1}\right)\right| = 2|T_0| + 4|T_2| - 2 = 6\left \lfloor\frac{b+2}{6}\right \rfloor + 2\left \lfloor \frac{(b+2) \!\mod 6}{2}\right \rfloor - 2$$ and hence $|B_1| = b - \left(b \mod 2\right)$.
	
	The sets $R_1$ and $B_1$ have even size, and we may need to add some involutions to them to get the size equal to $r$ and $b$ respectively. Three cases may arise:
	
	\begin{enumerate}[i.]
		\item Both $r$ and $b$ are even and therefore $|R_1| = r$ and $|B_1| = b$. In this case let $\Gamma = \mathbb{Z}_n$, $R = R_1$ and $B = B_1$. In this case $\lambda_{b,r} = 1$. 
		\item Either $r$ is odd or $b$ is odd, in which case we let $\Gamma = \mathbb{Z}_n$. If $r$ is odd we define $R = R_1 \cup \{\frac{n}{2}\}$ and $B = B_1$. Else if $b$ is odd we define $R = R_1$ and $B = B_1 \cup \{\frac{n}{2}\}$. Consequently $R$ and $B$ are inverse-closed and have size $r$ and $b$ respectively. Moreover, $R$ is sum-free. In this case $\lambda_{b,r} = 1$. 
		\item Both $r$ and $b$ are odd, in which case we let $\Gamma = \mathbb{Z}_2 \times \mathbb{Z}_n$. Define $B = \left(\{0\} \times B_1\right) \cup \{(0, \frac{n}{2})\}$ and $R = \left(\{0\} \times R_1\right) \cup \{(1,0)\}$, noting that $(0, \frac{n}{2})$ and $(1, 0)$ are involutions in $\mathbb{Z}_2 \times \mathbb{Z}_n$. Moreover, $R$ is sum-free by choice of $R_1$ and by the properties of the direct product. In this case $\lambda_{b,r} = 2$.
	\end{enumerate}  
 	 
 	Consider $H = \cay{\Gamma}{R}$ coloured monochromatically using colour $2$. Since $R$ is sum-free, then $e_2^H [v] = r = \deg^H_2 (v)$ for any $v$ in $\Gamma$.
 
 	Now consider $H = \cay{\Gamma}{B}$ coloured monochromatically using colour $1$. For any $v$ in $\Gamma$, $\deg_1^G (v) = b$. Moreover, there are at least $2 \left\lfloor\frac{b+2}{6}\right\rfloor^2$ edges in the open neighbourhood of $v$, since the group is Abelian and therefore the $\left\lfloor\frac{b+2}{6}\right\rfloor^2$ sums from $T_2$ to $2T_2$, and from $T_2^{-1}$ to $2T_2^{-1}$, all contribute an edge to the open neighbourhood. Hence, $$e_1^G [v] \geq b + 2\left\lfloor\frac{b+2}{6}\right\rfloor^2 > r = e_2^H [v].$$
 
 	Finally observe that by construction, as a consequence of Lemma \ref{sumsetLemma} and properties of the direct product, $(R + B) \cap R = \emptyset$. By Proposition \ref{cayley_union_counting} (iii), $\cay{\Gamma}{R \cup B}$ is a $(b, r)$-flip graph, and by our choice of $\Gamma$ we obtain the bound on $h(b, r)$.
\end{proof}

It is worth comparing this bound to the one given in Theorem \ref{oldBound} from \cite{caro2023flip}, noting that the new bound offers a significant improvement. This is illustrated in Figure \ref{boundComparison1}, for fixed $b$ and $1 < r - b < 2 \left \lfloor\frac{b}{6}\right\rfloor^2$.

\begin{figure}[h!]
\centering
\includegraphics[width=\textwidth]{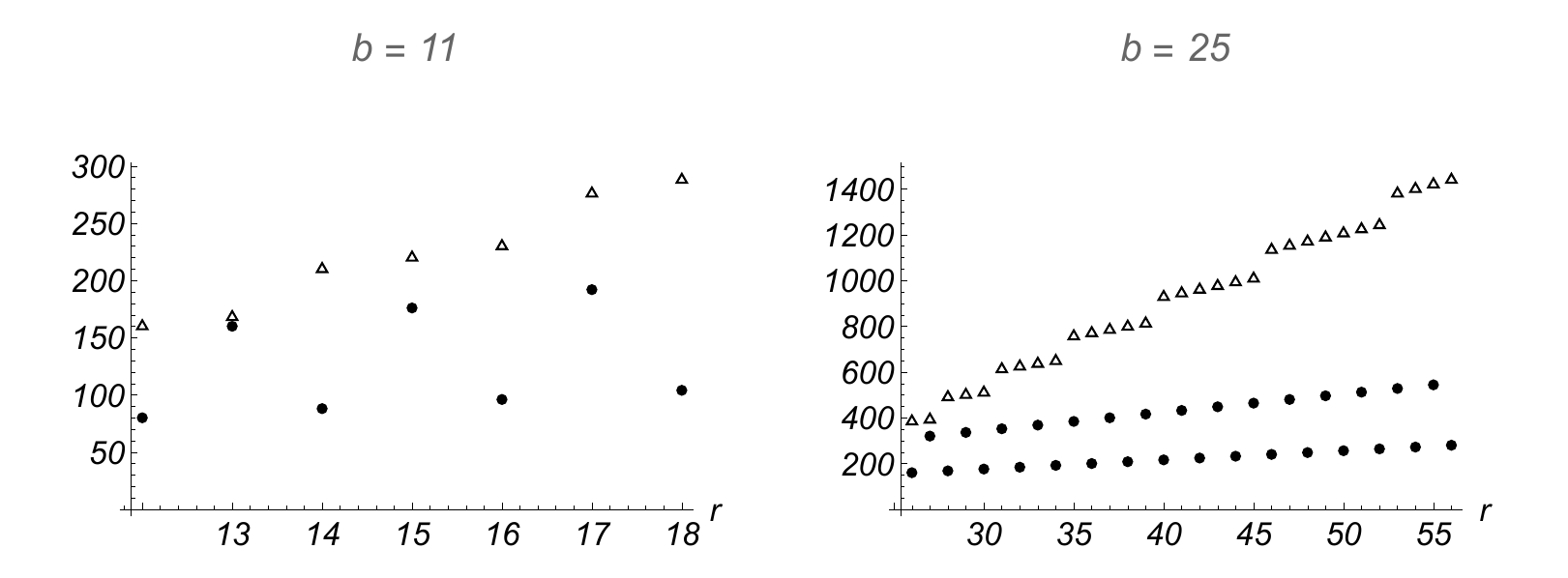}
\caption{Comparison of the bounds in $\triangle$ Theorem \ref{oldBound} and $\bullet$ Theorem \ref{newBound} for $b = 11$ and $b = 25$ over the common range for $r$ between the two bounds.}
\label{boundComparison1}
\end{figure}

Observe, however, that the existing bound in Theorem \ref{oldBound} holds for a wider range of values of $b$, suggesting that further work is to be done towards a unified bound. 
\section{Bounding $q(k)$}

In this section we shall consider the problem in \cite{caro2023flip} on establishing bounds on $q(k)$ for $k > 3$,
\begin{equation} \label{qKBound}
	\max\left\{1, \left\lceil\frac{k}{4}\right\rceil - 1\right\} \leq q(k) < \begin{cases}
 	\ \frac{k}{3} & \mbox{if $k \equiv 0 \ (\!\!\!\!\!\!\mod 3)$} \\
 	\left\lceil \frac{k}{2}\right\rceil & \mbox{otherwise}
 \end{cases}.
\end{equation}

\subsection{Upper bounds on $q(k)$}

We begin by proving the upper bound in (\ref{qKBound}). The following ideas stem from a communication by Caro \cite{caroPersonalComm}.

\begin{theorem}\label{qKUpperBound}
Let $k \in \mathbb{N}, k \geq 2$. Then $q(k) < \begin{cases}
 	\ \frac{k}{3} & \mbox{if $k \equiv 0 \ (\!\!\!\!\mod 3)$} \\
 	\left\lceil \frac{k}{2}\right\rceil & \mbox{otherwise} \\
 \end{cases}$.
\end{theorem}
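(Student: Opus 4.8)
The strategy is to extract a numerical obstruction from the defining inequalities of a $k$-flip graph. Suppose $(a_1,\dots,a_k)$ is a $k$-flip sequence realised by a $d$-regular graph $G$, so $d=\sum_j a_j$, and fix a vertex $v$. The chain $e_k[v] < e_{k-1}[v] < \cdots < e_1[v]$ forces $e_j[v] \geq e_{j+1}[v]+1$ for each $j$, and iterating gives $e_j[v] \geq e_k[v] + (k-j) \geq k-j$, so in particular the small-index colours must carry a lot of edges in the closed neighbourhood. The plan is to bound $\sum_{j=1}^k e_j[v]$ from above by the total number of edges in $N[v]$, which is at most $\binom{d+1}{2}$ where $d = a_1 + \cdots + a_k$, and to bound a suitable partial sum $\sum_{j=1}^{t} e_j[v]$ from below using both the strictly-decreasing chain and the constraint $e_j[v] \geq \binom{\deg_j(v)+1}{2}$-type lower bounds coming from the fact that all $a_j$ edges at $v$ in colour $j$ plus edges among the colour-$j$ neighbours lie in $N[v]$. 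Actually the cleaner lower bound is $e_j[v] \geq a_j$ (the edges incident to $v$ in colour $j$ alone), combined with $e_j[v] > e_{j+1}[v] \geq a_{j+1} \geq a_j + 1$, which already pushes the low-index $e_j[v]$ above the corresponding $a_j$.

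The heart of the argument should be a counting inequality of the following shape: summing the chain of strict inequalities yields $\sum_{j=1}^{k} e_j[v] \geq k\, e_k[v] + \binom{k}{2} \geq \binom{k}{2} + k$ (using $e_k[v]\geq \deg_k(v) = a_k \geq k$ since the $a_j$ are strictly increasing positive integers), while on the other hand $\sum_{j=1}^{k} e_j[v]$ equals the total number of edges in $N[v]$ with multiplicity one, bounded above by $\binom{d+1}{2}$. To get a bound on $q(k)$ rather than on the magnitude of $a_k$, I would instead isolate the \emph{first} $t := q(k)$ colours: if $a_1,\dots,a_t$ are all held at a constant value $h(k)$ independent of $N$ while $a_k > N$, then $d \geq N$, but the edges of $N[v]$ in colours $1,\dots,t$ number at least $\sum_{j=1}^t e_j[v] \geq \sum_{j=1}^t (e_t[v] + (t-j)) \geq t\, e_t[v] + \binom{t}{2}$, and these must fit inside a neighbourhood structure that, crucially, is controlled — the colour-$\le t$ edges in $N[v]$ live in the graph induced on $N[v]$ where the colour-$\le t$ degree of $v$ is $a_1+\cdots+a_t = t\,h(k)$, hence $\sum_{j=1}^t e_j[v] \leq \binom{t\,h(k)+1}{2}$ is \emph{false} in general since other-coloured neighbours contribute too; the right bound is that the colour-$\le t$ edges incident to $v$ number $t\,h(k)$, and the colour-$\le t$ edges among neighbours of $v$ is at most $\binom{d}{2}$, which does not immediately help. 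The key realisation is that one must also use the \emph{upper} part of the chain: $e_1[v] > \cdots > e_k[v]$ means $e_1[v] \geq e_k[v] + k - 1$, and $e_1[v] \leq e_1(N[v])$ is at most the number of edges among the $\le \binom{d+1}{2}$ total, but $e_1[v]$ counts only colour-$1$ edges, of which $v$ is incident to exactly $a_1 = h(k)$; the colour-$1$ edges among neighbours form a graph on $d$ vertices with maximum degree $a_1$, so $e_1[v] \leq h(k) + \frac{1}{2}\, d\, a_1 = h(k)(1 + d/2)$. Comparing with $e_1[v] \geq e_k[v] + k-1 \geq a_k + k - 1 > N$ forces $h(k)(1+d/2) > N$; since $d$ grows with $N$ this is not yet a contradiction, so the final ingredient must pit the growth of $d$ (driven by $a_k$, a single large colour) against how \emph{many} colours can be small: if colours $1,\dots,t$ are all small then roughly $k-t$ colours must absorb the growth, and the pigeonhole/averaging over the chain forces $t$ to be bounded below $k/2$ (or below $k/3$ in the divisible case) — this averaging step, converting the per-vertex edge-count inequality into the stated fraction of $k$, is the main obstacle and where the $\lceil k/2\rceil$ versus $k/3$ dichotomy will be born, presumably from a parity or rounding refinement when $3 \mid k$. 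I would carry out the argument by: (1) recording the iterated strict-inequality bounds $e_j[v]\ge e_k[v]+(k-j)$; (2) recording the degree-based bounds $e_j[v]\le a_j(1+d/2)$ via the colour-$j$ subgraph on $N[v]$; (3) combining at $j=1,\dots,t$ to get $t$ inequalities $a_j(1+d/2) \ge a_k + (k-j)$, i.e. each small $a_j$ must still be $\gtrsim (a_k+k-j)/(1+d/2)$; (4) summing $d = \sum a_j$ and using that the $t$ small colours contribute only $t\,h(k)$ while the remaining $k-t$ colours, each at most... — and then deriving the threshold on $t$ by comparing $\sum_{j=1}^k a_j$ against $\sum_{j=1}^k e_j[v] \le \binom{d+1}{2}$ after substituting the lower bounds. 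The $k \bmod 3$ case should drop out by a sharper version of step (3) that uses three colours at a time rather than one.
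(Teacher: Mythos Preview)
Your proposal does not close, and the gap is structural rather than a matter of bookkeeping. Every inequality you write down has the shape $e_j[v] \le a_j(1+d/2)$ or $e_j[v] \ge a_k + (k-j)$; combining these yields only $a_j \gtrsim a_k/d$. But $d=\sum_i a_i$ is itself of order $a_k$ once $a_k$ is large, so what you actually obtain is $a_j \gtrsim$ a constant---precisely the trivial bound. You notice this yourself (``this is not yet a contradiction''), and the suggested fix---pigeonhole over the $k-t$ large colours versus the $t$ small ones---does not help: plugging $d \le tC + (k-t)a_k$ into $C(1+d/2) > a_k$ gives, for large $a_k$, only $k-t > 2/C$, which is vacuous. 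None of the per-colour edge-count bounds you have access to see the \emph{quadratic} relationship between small and large colour-degrees that is needed to cap $a_k$ in terms of $a_{\lceil k/2\rceil}$ or $a_{k/3}$.

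The missing idea is to \emph{reduce} to the already-proved $2$- and $3$-colour theorems rather than redo them. If you merge consecutive blocks of colours---say colours $1,\dots,t$ into a new colour $1$ and colours $t+1,\dots,k$ into a new colour $2$---then both the colour-degree sequence and the closed-neighbourhood counts remain monotone (each $b_j$ and each $\sum e_j[v]$ is a sum of the same number of consecutive terms of a monotone sequence), so the re-coloured $G$ is a $(b_1,b_2)$-flip graph. Theorem~\ref{RBClassification} then gives $b_2 < \binom{b_1+1}{2}$, i.e.\ $a_k \le b_2$ is quadratically bounded in $b_1 \le t\,a_t$, forcing $q(k) < t = \lceil k/2\rceil$. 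When $3\mid k$ you merge into three equal blocks instead and invoke Theorem~\ref{3colour_bound} to get $b_3 < 2b_1^2$, hence $q(k) < k/3$. Your closing sentence (``uses three colours at a time'') is gesturing at exactly this, but the crucial step---that the merged colouring is again a flip colouring, so that the $2$- and $3$-colour bounds apply wholesale---is absent from the plan.
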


\begin{proof}
	The cases $k = 2$ and $k = 3$ immediately follows from Theorems \ref{RBClassification} and \ref{3colour_bound}. Hence consider $k \geq 4$. Let $G$ be an $(a_1, \dots, a_k)$-flip graph. Consider the case when $k \equiv 0 \ (\!\!\!\!\mod 3)$ and let $p = \frac{k}{3}$.  Re-colour the edges of $G$ such that the $p$ colours $p(j-1) + 1, \dots, pj$ are coloured $j$ for $j \in \{1, 2, 3\}$. For $j \in \{1, 2, 3\}$, define $b_j = \sum_{i = 1}^{s} a_{(i-1)s + j}$. Note that $b_1 < b_2 < b_3$ by the monotonicity of the $k$-flip sequence and the fact that each $b_j$ is a sum of $s$ terms. 
	
	Applying a similar argument to the coloured closed neighbourhood sizes, one observes that $G$ is a $(b_1, b_2, b_3)$-flip graph.
	
	By Theorem \ref{3colour_bound}, it follows that $b_3 \leq 2 (b_1)^2$ and hence $a_k \leq 2k^2 (a_s)^2$. Therefore $a_k$ is bound in $a_{\frac{k}{3}}$ and hence $q(k) < \frac{k}{3}$ when $k \equiv 0 \ (\!\!\!\!\mod 3)$.
	
	Otherwise, consider $t = \left\lceil\frac{k}{2}\right\rceil$. Let $c_1 = \sum_{i = 1}^t a_i$ and $c_2 = \sum_{i = 1}^{k-t} a_{t+i}$. Two cases are possible: either $2t = k$ or $2t - 1 = k$. In the case that $2t - 1 = k$, it need not necessarily be that case that $c_1 < c_2$. Indeed, suppose that $c_1 \geq c_2$. Then $t a_t \geq a_k$ and hence $a_k$ is bounded in $a_{\left\lceil\frac{k}{2}\right\rceil}$.
	
	Consider the cases when $2t = k$ or $2t - 1 = k$ but $c_1 < c_2$. In both these cases, by a similar argument to the case when $k \equiv 0 \ (\!\!\!\!\mod 3)$, we have that $G$ is a $(c_1, c_2)$-flip graph. By Theorem \ref{RBClassification} it follows that $c_2 < \binom{c_1 + 1}{2}$ and hence $a_k$ is quadratically bound in $a_{\left\lceil\frac{k}{2}\right\rceil}$. It follows that $q(k) < \left\lceil\frac{k}{2}\right\rceil$.
\end{proof}

\subsection{Lower bounds on $q(k)$}

In this section we shall prove the lower bound in (\ref{qKBound}). We first note the following useful result on the existence of flipping intervals which are realised by flip graphs having a difference of 1 in the sequence of coloured closed neighbourhood sizes. This will be a critical step in our proof of the lower bound.

\begin{theorem}\label{flippingIntervalCorr}
Let $q, b \in \mathbb{N}$ such that $q > 1$, $b \geq 101$ and $\left\lfloor \frac{1}{4} \big(b^2 - 10b^{\frac{3}{2}}\big)\right\rfloor \geq q - 1$. Then $[b, b+q-1]$ is a flipping interval realised by a $q$-flip graph $G$ with $e_j [v] - e_{j+1} [v] = 1$ for all $1 \leq j < q$ and $v \in V(G)$. 
\end{theorem}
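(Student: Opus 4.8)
The flip sequence is forced to be $(b, b+1, \dots, b+q-1)$, so the plan is to realise it by a Cayley graph $\cay{\Gamma}{S}$ on an Abelian group, using vertex-transitivity to reduce every verification to the single vertex $1_\Gamma$. Take $\Gamma = \mathbb{Z}_n$ for a large, suitably divisible $n$, passing to $\mathbb{Z}_2\times\mathbb{Z}_n$ when the parity of some $b+j-1$ forces the use of an involution, exactly as in the proof of Theorem \ref{newBound}. Write $S = S_1\,\dot\cup\,\cdots\,\dot\cup\,S_q$ with each $S_j$ inverse-closed and $|S_j| = b+j-1$; then $\cay{\Gamma}{S}$ is regular with $\deg_j(v) = b+j-1$ for every $v$, so it carries the desired flip sequence, and the only thing left is to arrange $e_j[1_\Gamma] - e_{j+1}[1_\Gamma] = 1$ for all $j$.

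Since $N[1_\Gamma] = \{1_\Gamma\}\cup S$, we have $e_j[1_\Gamma] = (b+j-1) + \#\{\{x,y\}\subseteq S : x-y\in S_j\}$. Because $e_j[v]\ge\deg_j(v)$ and the gaps must all equal $1$, the smallest possible value of $e_q[1_\Gamma]$ is $b+q-1$; taking it to be this minimum, the requirement becomes exactly that $\#\{\{x,y\}\subseteq S : x-y\in S_j\} = 2(q-j)$ for every $j$. I would meet these $q$ targets by placing the bulk of each $S_j$ as a short integer interval (with its inverse) inside $\big(\tfrac n8,\tfrac n4\big)$, with the $q$ intervals pairwise well-separated so that — by a sum-set argument of the type carried out in Lemma \ref{sumsetLemma} — differences of two elements lying in distinct intervals avoid every $S_{j'}$; the target $2(q-j)$ is then produced by adjoining to $S_j$ a carefully sized collection of \emph{chords} (short differences). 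For $j$ close to $q$ the target is tiny and this is immediate (a sum-free $S_j$ already yields $0$); the binding case is $j=1$, where $2(q-1)$ pairs $\{x,y\}\subseteq S$ with $x-y\in S_1$ must be produced while $|S_1|$ is only $b$.

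The heart of the proof, and the step I expect to be the main obstacle, is this last point together with the attendant bookkeeping: one must show that an inverse-closed set of $b$ elements can be tuned to contribute exactly $2(q-1)$ such pairs — realisability holding throughout an initial segment of target values of length roughly $\tfrac14 b^2$, with only the topmost $O(b^{3/2})$ band suffering coarser-than-unit granularity — while simultaneously guaranteeing that introducing chords into the various $S_j$ creates no unintended cross-colour edge inside $N[1_\Gamma]$ (a multi-colour strengthening of Proposition \ref{cayley_union_counting} and Lemma \ref{sumsetLemma}). The quantitative hypothesis $q-1\le\big\lfloor\tfrac14(b^2-10b^{3/2})\big\rfloor$ is exactly the feasibility condition that emerges: the main term is the effective size of the realisable window for the smallest colour class, and the $b^{3/2}$ correction absorbs the granularity loss at its top. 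The remaining steps are routine — handle the parity cases by adjoining $\tfrac n2$, or $(1,0)\in\mathbb{Z}_2\times\mathbb{Z}_n$, to the relevant $S_j$ and re-checking the sum-sets as in Theorem \ref{newBound}, and take $n$ large enough for all the interval placements to fit with the prescribed separation.

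A cleaner assembly that isolates the hard part is to build $G$ instead as a Cartesian product of $\lceil q/2\rceil$ two-colour factors using disjoint colour sets: by Lemma \ref{CPColourLemma} both $\deg_j$ and $e_j[\cdot]$ add over the factors, so it suffices to produce, for each factor, a difference-one $2$-flip Cayley graph with consecutive colour-degrees and a prescribed coloured closed-neighbourhood size, and then chain those prescribed sizes so that the $e_j[\cdot]$-values of the product run consecutively. This confines the interval-tuning argument to the two-colour setting, where Proposition \ref{cayley_union_counting} applies verbatim, at the price of a more bureaucratic choice of the per-factor constants.
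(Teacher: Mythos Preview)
The paper does not prove this theorem at all: its entire proof is the single sentence ``Follows immediately from the construction in the proof of Theorem 5.2 and Corollary 5.3 of \cite{caro2023flip}.'' The result is quoted from the original flip-colouring paper, and the present paper merely records that the specific construction there already yields the difference-one property. Your proposal, by contrast, attempts to build such a graph from scratch using the Cayley-graph machinery of Section~3; this is a genuinely different and much more ambitious undertaking than what the paper does.

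As a sketch your plan is coherent --- the reduction to counting pairs $\{x,y\}\subseteq S$ with $x-y\in S_j$ is correct, and the target $2(q-j)$ is right --- but it is not a proof. The central step, producing for each $j$ \emph{exactly} $2(q-j)$ such pairs while keeping all cross-colour interactions under control, is only asserted (``I would meet these $q$ targets by\dots''). Adding a single chord element typically creates many pairs at once, and you give no mechanism for hitting an arbitrary target value precisely; nor do you show that the $q$ tuning problems can be solved simultaneously without one colour's chords landing in another colour class. The claim that the hypothesis $q-1\le\bigl\lfloor\tfrac14(b^2-10b^{3/2})\bigr\rfloor$ is ``exactly the feasibility condition that emerges'' is not justified: nothing in your outline explains why the correction term should be of order $b^{3/2}$ rather than, say, $b$ or $b^2$. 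The Cartesian-product variant has the same gap --- you still need per-factor graphs with \emph{prescribed} values of $e_j[\cdot]$, which is the hard part you have not done. If you want an independent proof you must actually carry out the tuning; otherwise, the correct proof here is simply to invoke the construction from \cite{caro2023flip}.
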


\begin{proof}
	Follows immediately from the construction in the proof of Theorem 5.2 and Corollary 5.3 of \cite{caro2023flip}.
\end{proof}

We next show that, if $q$-flip graphs satisfying particular properties exist, then $(a_1, \dots, a_k)$-flip graphs exist for $k > 4q$, where $a_k$ can be arbitrarily large whilst $a_i$ is constant for $1 \leq i \leq q$. 

\begin{lemma} \label{GapsProp}
	Let $q, k \in \mathbb{N}$ such that $1 < q < \frac{k}{4}$. Let $D_1, \dots, D_q \in \mathbb{N}$ such that $D_q (k - 4q) > 1 + \xi q(q-1) + 5\binom{k-q}{2}$ where $\xi = \max\limits_{1 \leq j < q}\{D_j - D_{j+1}\}$. 
	
	If there exists a $(a_1, \dots, a_q)$-flip graph $F$ such that for every $v \in V(F)$ and $1 \leq j \leq q$, $e_j^F [v] = D_j$, then given any $N \in \mathbb{N}$ there exists a $(a_1, \dots, a_k)$-flip graph for some $a_{q+1}, \dots, a_k \in \mathbb{N}$ where $a_k > N$.
\end{lemma}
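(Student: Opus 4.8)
The plan is to build the desired $(a_1,\dots,a_k)$-flip graph as an edge-coloured Cartesian product of the given $q$-flip graph $F$ with an auxiliary $(k-q)$-flip graph $L$ whose last colour-degree is enormous. Concretely, I would take $L$ to be a flip graph on colours $q+1,\dots,k$ realising a flipping interval of the shape $[b,b+(k-q)-1]$ with consecutive differences exactly $1$ in the coloured closed-neighbourhood sizes — this is exactly what Theorem \ref{flippingIntervalCorr} supplies once $b$ is chosen large (and $b\ge 101$, $\lfloor\frac14(b^2-10b^{3/2})\rfloor\ge k-q-1$). The parameter $b$ is the free knob: by Theorem \ref{flippingIntervalCorr} we get colour-degrees $a_{q+1},\dots,a_k$ with $a_k\ge b$, so taking $b>N$ (and $b$ large enough for the inequality below) makes $a_k$ arbitrarily large while the first $q$ colour-degrees, coming entirely from $F$, stay fixed at $a_1,\dots,a_q$. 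Relabel colours so $F$ uses $\{1,\dots,q\}$ and $L$ uses $\{q+1,\dots,k\}$; then in $G=F\,\square\,L$ every colour $1,\dots,k$ appears.

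The heart of the argument is checking the two flip-graph conditions for $G$ using Lemma \ref{CPColourLemma}. Colour-degrees are immediate: $\deg_j^G((u,v))=\deg_j^F(u)$ for $j\le q$ and $\deg_j^G((u,v))=\deg_j^L(v)$ for $j>q$, which by regularity of $F$ and $L$ gives the desired $a_1,\dots,a_k$ (note $a_1<\dots<a_q$ holds by hypothesis on $F$, $a_{q+1}<\dots<a_k$ by the flipping-interval property, and the single junction $a_q<a_{q+1}$ is arranged by choosing $b$ — hence all the $a_i$ for $i>q$ — larger than $a_q$, which is legitimate since $b$ is free). For the closed-neighbourhood inequalities, Lemma \ref{CPColourLemma}(ii) gives $e_j^G[(u,v)] = e_j^F[u]+e_j^L[v]$. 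For colours within the $F$-block, $e_j^L[v]=0$ for $j\le q$ is false in general — rather, $L$ is not coloured with colours $\le q$, so $e_j^L[v]=0$ for $j\le q$, and similarly $e_j^F[u]=0$ for $j>q$. Thus $e_j^G[(u,v)] = D_j$ for $1\le j\le q$ (strictly decreasing by hypothesis), and $e_j^G[(u,v)] = e_j^L[v]$ for $q+1\le j\le k$ (strictly decreasing, with consecutive gaps exactly $1$, by Theorem \ref{flippingIntervalCorr}). The only inequality that is not automatic is the junction $e_{q+1}^G[(u,v)] < e_q^G[(u,v)]$, i.e.\ $e_{q+1}^L[v] < D_q$. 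Here Theorem \ref{flippingIntervalCorr} lets us place the flipping interval $[b,b+(k-q)-1]$ with $e_{q+1}^L[v]$ equal to the \emph{top} value and the coloured closed-neighbourhood sizes strictly below it all the way down — but we need $e_{q+1}^L[v]$ itself to be smaller than $D_q$, which forces a \emph{small} $e_{q+1}^L[v]$. That is the obstacle, and it is why the hypothesis $D_q(k-4q) > 1 + \xi q(q-1) + 5\binom{k-q}{2}$ appears.

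To get around it I would not take $F\,\square\,L$ naively but instead iterate/pad: replace $L$ by a graph obtained from the flipping-interval graph $L_0$ on colours $q+1,\dots,k$ by a strong-product-type or repeated-packing construction that inflates the colour-degree of the last colour without inflating $e_{q+1}^L[v]$ past $D_q$; alternatively, absorb several "low" copies so that the value $e_{q+1}[v]$ in the final graph is a bounded combination of the $D_j$'s and the small quantities $\binom{k-q}{2}$, $\xi q(q-1)$. Tracking exactly how $e_{q+1}[v]$ and $e_k[v]$ grow through this construction is where the numerical hypothesis gets used: the term $5\binom{k-q}{2}$ bounds the spurious contributions from cross-edges (cf.\ the factor $5$ and the $e_i^H[v]$ sum in Lemma \ref{SPColourLemma}(ii)), the term $\xi q(q-1)$ controls the accumulated slack across the $q$ fixed colours, the lone $+1$ is the strict-inequality margin, and $D_q(k-4q)$ on the left is the surplus of "room" available because $q<\frac k4$ gives $k-4q>0$ colours' worth of headroom. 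So the key steps, in order, are: (1) invoke Theorem \ref{flippingIntervalCorr} to obtain a flipping-interval graph on $k-q$ colours with unit gaps and top value as large as we like; (2) form the coloured product/packing of $F$ with (a suitably padded version of) this graph, reading off colour-degrees from Lemma \ref{CPColourLemma}(i)/Lemma \ref{SPColourLemma}(i) and $e_j[v]$ from part (ii); (3) verify the $k-1$ closed-neighbourhood inequalities, all of which are free except the single junction inequality $e_{q+1}[v]<e_q[v]$; (4) discharge that junction inequality using precisely the hypothesis $D_q(k-4q) > 1 + \xi q(q-1) + 5\binom{k-q}{2}$; (5) choose the free size parameter so that $a_k>N$. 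The main obstacle is step (4)–(2): arranging the product so that enlarging $a_k$ does not drag $e_{q+1}[v]$ up with it, and bounding all the cross-term contributions cleanly enough to match the stated inequality.
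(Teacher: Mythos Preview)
Your diagnosis of the obstacle is correct: in a plain Cartesian product $F\,\square\,L$ the junction inequality $e_{q+1}<D_q$ fails as soon as you push $b$ up to make $a_k$ large. But your proposed fix is where the gap lies. You want to start from a flipping-interval graph $L_0$ (via Theorem~\ref{flippingIntervalCorr}) and then ``pad'' it so that $a_k$ grows while $e_{q+1}$ stays below $D_q$. That is backwards: once $L_0$ already has $e_{q+1}^{L_0}[v]$ of order $b$, no strong product or packing will bring it back down --- those operations only multiply or add to the $e_j$'s. The paper does \emph{not} invoke Theorem~\ref{flippingIntervalCorr} at this stage at all; that theorem is used only later (in Theorem~\ref{qKLowerBound}) to manufacture the input graph $F$, not the auxiliary graph on the high colours.

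What the paper actually does is a three-factor construction $G = H \boxtimes (F\,\square\,K)$. Here $K=\cay{\Gamma}{S}$ is a Cayley graph on a sum-free set $S$ partitioned into pieces $S_1,\dots,S_{k-q-1}$ with $|S_j|=k-q-j$, each piece getting colour $q+j$; sum-freeness forces $e_{q+j}^K[v]=\deg_{q+j}^K(v)=k-q-j$, so the closed-neighbourhood counts on colours $q+1,\dots,k-1$ are \emph{small and fixed}, not large. The graph $H$ is a $\rho$-regular bipartite graph whose edges are split into coloured perfect matchings, with $t+j-1$ matchings receiving colour $q+j$ for $1\le j\le k-q$; bipartiteness means $N^H(u)$ is independent, so $e_j^H[u]=\deg_j^H(u)$. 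Now Lemma~\ref{SPColourLemma} says the strong product multiplies every $e_j^{F\square K}$ by $\rho+1\sim(k-q)t$ and adds an $O(t)$ correction from $H$. Since $D_q$ and $D_{q+1}=k-q-1$ are both scaled by the same factor $\rho+1$, the junction inequality survives provided $(D_q-D_{q+1})(k-q)$ beats the additive junk $1+\mu+2\sum_i D_i$; unwinding that is exactly where the hypothesis $D_q(k-4q)>1+\xi q(q-1)+5\binom{k-q}{2}$ is spent. Meanwhile the colour-\emph{degrees} for $j>q$ pick up the factor $(1+\mu)$ times $t+j-q-1$ from $H$, so they increase with $j$ and blow up with $t$, giving $a_k>N$ for $t$ large. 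The missing idea in your sketch is precisely this: keep the base $e_{q+1}$ small (sum-free Cayley graph), and let a bipartite matching-coloured factor in a strong product do the degree inflation uniformly.
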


\begin{proof}
Let $\Gamma$ be a sufficiently large finite group such that $\Gamma$ has $k - q - 1$ disjoint and inverse-closed subsets $S_j$ where $|S_j| = k - q - j$ for $1 \leq j < k - q$, and such that $S = \bigcup\limits_{j = 1}^{k - q - 1} S_j$ is sum-free in $\Gamma$. 

Consider $K = \cay{\Gamma}{S}$ with an edge-colouring such that the edges labelled in $S_j$ are coloured using $q + j$. Hence for any $v \in \Gamma$, $\deg_{q+j}^K (v) = k - q - j$ and by the sum-free condition on $S$, $e_{q+j}^K [v] = k - q - j$.

Now consider the coloured Cartesian product $F \ \square \ K$, which is $k - 1$ coloured since $F$ is coloured using $1, \dots, q$ and $K$ is coloured using $q + 1, \dots, k - 1$. 

By Lemma \ref{CPColourLemma}, for a given colour $j$ the graph has colour-degree $a_j$ inherited from $F$ for $1 \leq j \leq q$, and $a_j = k - j$ inherited from $K$ when $q < j < k$. Likewise, the number of edges coloured $j$ in a closed neighbourhood is $D_j$ inherited from $F$ for $1 \leq j \leq q$ and $D_j = k - j$ inherited from $K$ for $q < j < k$. 

Finally, note that $F \ \square \ K$ is $\mu = \binom{k-q}{2} + \sum\limits_{i = 1}^q a_i$ regular. Let $t \in \mathbb{N}$ such that $$t \geq \dfrac{1 + \mu + 2\sum\limits_{i = 1}^k D_i}{(k-q)\min\limits_{q < i < k}\{D_i - D_{i+1}\}}$$ and let $H$ be a $\rho = (k-q)t + \binom{k-q}{2}$ regular bipartite graph. For $1 \leq j \leq k - q$, colour $t + j - 1$ matchings of $H$ using $q + j$. 

Let $G$ be the coloured product $H \boxtimes (F \ \square \ K)$. By Lemma \ref{SPColourLemma}, for $v \in V(G)$ and $1 \leq j \leq k$, the edge-colouring in $G$ is such that $$\deg_j (v) = 
\begin{cases} 
a_j & 1 \leq j \leq q \\
a_j + (t + i - q - 1)(1 + \mu)& q < j \leq k
\end{cases}
$$ and $$e_j [v] = \begin{cases} 
D_j (\rho + 1) & 1 \leq j \leq q \\
D_j (\rho + 1) + (t + j - q - 1)\left(1 + \mu + 2\sum\limits_{i=1}^k D_i\right)& q < j \leq k
\end{cases}.
$$

We will show that $G$ as constructed and edge-coloured is a $k$-flip graph. Observe that for $1 \leq j \leq k$, $a_j < 1 + \mu$. Hence for $j = q$ we have that $$\deg_q (v) = a_q < 1 + \mu < a_{q+1} + t(1 + \mu) = \deg_{q+1} (v)$$ and for $j > q$ we have that $\deg_{j+1} (v) - \deg_j (v) = \mu > 0$. Consequently the colour-degree sequence in $G$ is strictly increasing.

Since for $1 \leq j < q$ we have $D_j > D_{j+1}$ then in $G$ we have $e_j [v] > e_{j+1} [v]$. Next note that $\rho + 1 = (k-q)t\kappa$ for some $\kappa > 1$. Hence $(D_q - D_{q + 1})(\rho + 1) > (D_q - D_{q+1})(k-q)t$. Therefore to show that $e_q [v] > e_{q+1} [v]$ it suffices to show $$(D_q - D_{q+1})(k-q) > 1 + \mu + 2\sum\limits_{i=1}^k D_i.$$ 

From the lower bound on $D_q$ in the theorem statement we have that
\begin{align*}
	&D_q (k - q) &&\\
	&> 1 + (3q)D_q + \xi q(q-1) +  5\binom{k-q}{2} &&\\
	&> 1 + \sum\limits_{i = 1}^q a_i + (2q)D_q + \xi q(q-1) +  5\binom{k-q}{2} && \because a_1 < \dots < a_q \leq D_q\\
	&= 1 + \mu + (2q)D_q + \xi q(q-1) +  4\binom{k-q}{2} && \because \mu = \sum\limits_{i = 1}^q a_i + \binom{k-q}{2}\\
	&\geq 1 + \mu + 2\sum_{i=1}^q D_i + 4\binom{k-q}{2} && \because D_{q-i} \leq D_q + i\xi\\
	&= 1 + \mu + 2\sum_{i=1}^k D_i + 2\binom{k-q}{2} && \because \binom{k-q}{2} = \sum\limits_{i = q + 1}^{k} D_i\\
	&= D_{q+1} (k - q) + 1 + \mu + 2\sum_{i=1}^k D_i && \because D_{q+1} = k - q - 1
\end{align*}
as required and therefore the colours $q$ and $q+1$ flip in $G$.

Consider the final case when $q < j < k$. By the choice of $t$ and $\kappa > 1$, we have that $$(D_j - D_{j + 1})(\rho + 1) = (D_j - D_{j+1})(k-q)(t\kappa) > 1 + \mu + 2\sum\limits_{i = 1}^k D_i$$ which we can re-arrange to get $e_j [v] > e_{j+1} [v]$. Hence the sequence of closed neighbourhood sizes is strictly decreasing as required.

It follows that $G$ is a flip graph on $k$ colours, such that for any vertex $v$,  the difference between $\deg_q (v)$ and $\deg_k (v)$ grows in $t$ as $t \to \infty$.
\end{proof}

We are finally in a position to prove the lower bound in (\ref{qKBound}).

\begin{theorem}\label{qKLowerBound}
	Let $k \in \mathbb{N}$ such that $k > 3$. Then for any $q \in \mathbb{N}$ such that $q = 1$ or $q < \frac{k}{4}$, there exists $a_1, \dots, a_q \in \mathbb{N}$ such that given any $N \in \mathbb{N}$ there exists a $(a_1, \dots, a_k)$-flip graph for some $a_{q+1}, \dots, a_k \in \mathbb{N}$ where $a_k > N$.
\end{theorem}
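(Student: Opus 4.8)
The plan is to split on the value of $q$. If $q = 1$ there is nothing new to prove: Theorem~\ref{arbitraryGapsThm_h_1} already supplies a constant $m = m(k)$ and, for every $N$, a $k$-flip sequence $(a_1,\dots,a_k)$ with $a_1 = m$ and $a_k > N$, so $a_1 = m$ is the required fixed value. From now on assume $2 \le q < \tfrac{k}{4}$ (hence $k > 8$), which is exactly the regime in which Lemma~\ref{GapsProp} is available. By that lemma it is enough to produce constants $D_1 > D_2 > \dots > D_q$, together with an $(a_1,\dots,a_q)$-flip graph $F$ satisfying $e_j^F[v] = D_j$ for every $v \in V(F)$ and $1 \le j \le q$, such that $D_q(k-4q) > 1 + \xi q(q-1) + 5\binom{k-q}{2}$, where $\xi$ is the largest gap occurring in the sequence $(D_1,\dots,D_q)$.

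To manufacture such an $F$ I would invoke Theorem~\ref{flippingIntervalCorr}. Choose $b \in \mathbb{N}$ with $b \ge 101$, with $\lfloor \tfrac14(b^2 - 10 b^{3/2})\rfloor \ge q - 1$, and, crucially, with
\[
b(k-4q) > 1 + q(q-1) + 5\binom{k-q}{2};
\]
since $k - 4q \ge 1$ the left-hand side grows linearly in $b$ while the right-hand side is a constant depending only on $k$ and $q$, so such a $b$ exists (and it simultaneously satisfies the two preceding conditions once it is large). Theorem~\ref{flippingIntervalCorr} then yields a $q$-flip graph $F$ realising the flipping interval $[b, b+q-1]$ with $e_j^F[v] - e_{j+1}^F[v] = 1$ for all $v$ and all $1 \le j < q$. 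Setting $D_j := e_j^F[v]$, the sequence $(D_1,\dots,D_q)$ is a decreasing run of consecutive integers with $D_q$ of size at least $b$, so every gap is $1$ and $\xi = 1$; the displayed inequality on $b$ is then precisely the hypothesis $D_q(k-4q) > 1 + \xi q(q-1) + 5\binom{k-q}{2}$ of Lemma~\ref{GapsProp}. Feeding $F$ and its colour-degree sequence $(a_1,\dots,a_q)$ into Lemma~\ref{GapsProp} gives, for every $N$, an $(a_1,\dots,a_k)$-flip graph with $a_k > N$, as required.

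The step I expect to be the main obstacle is confirming that the $q$-flip graph supplied by Theorem~\ref{flippingIntervalCorr} really has $e_j^F[v]$ equal to the same constant $D_j$ at \emph{every} vertex $v$, and not merely constant consecutive differences — Lemma~\ref{GapsProp} takes the former as input. This ought to follow from the vertex-transitive (Cayley-type) nature of the construction behind Theorem~5.2 of \cite{caro2023flip}, but it is the point to verify carefully. A secondary, purely bookkeeping, matter is to double-check that the only occurrences of $\xi$ relevant to Lemma~\ref{GapsProp} concern gaps internal to $(D_1,\dots,D_q)$, so that the choice of a consecutive run genuinely forces $\xi = 1$. Finally, specialising to $q = \lceil k/4\rceil - 1$ — which satisfies $q < k/4$ whenever $q \ge 1$, with the residual case $k = 4$ covered by the $q = 1$ clause via Theorem~\ref{arbitraryGapsThm_h_1} — yields the lower bound on $q(k)$ asserted in Theorem~\ref{qKBound}.
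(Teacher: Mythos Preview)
Your proposal is correct and follows the same route as the paper: handle $q=1$ via Theorem~\ref{arbitraryGapsThm_h_1}, and for $1<q<k/4$ pick $b$ large enough to satisfy the hypotheses of Theorem~\ref{flippingIntervalCorr} together with the inequality $b(k-4q)>1+q(q-1)+5\binom{k-q}{2}$, then feed the resulting $q$-flip graph with unit gaps ($\xi=1$) into Lemma~\ref{GapsProp}. The point you flag---that $e_j^F[v]$ must be the \emph{same} constant $D_j$ at every vertex, not just have constant consecutive differences---is genuine and is indeed glossed over in the paper's own proof; it is justified by the fact that the construction behind Theorem~5.2 of \cite{caro2023flip} is a Cayley graph, hence vertex-transitive, so your instinct there is correct.
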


\begin{proof}
	The case $q = 1$ follows immediately from Theorem \ref{arbitraryGapsThm_h_1}. Hence consider the case when $1 < q < \frac{k}{4}$. 
	
	Let $b \in \mathbb{N}$ be sufficiently large such that $b \geq \max\left\{101, \dfrac{1 + q(q-1)+5\binom{k-q}{2}}{k-4q}\right\}$ and $\left\lfloor \frac{1}{4} \big(b^2 - 10b^{\frac{3}{2}}\big)\right\rfloor \geq q - 1$. By Corollary \ref{flippingIntervalCorr} and the choice of $b$, there exists a $(b, \dots, b + q - 1)$ flip graph $F$ where for every vertex $v \in V(F)$ and $1 \leq j < q$, $\xi = \max\limits_{1 \leq j < q}\{e_j [v] - e_{j+1} [v]\} = 1$. Moreover, $$e_q [v] (k - 4q) \geq (b + q - 1)(k - 4q) > b(k - 4q) \geq 1 + \xi q(q-1)+5\binom{k-q}{2}$$ and therefore the result follows as an immediate consequence of Lemma \ref{GapsProp}.
\end{proof}
\section{Concluding remarks}

In this paper we set out to answer two problems of \cite{caro2023flip} concerning bounds on $h(b,r)$ and $q(k)$ for flip graphs. Given $k > 4$, since $h(a_1, \dots, a_k) > a_k$, as a consequence of Theorem \ref{qKLowerBound} we have that there exist $k$-flip sequences where $a_i$ are fixed for $1 \leq i < \frac{k}{4}$, but $a_k \to \infty$. In particular one observes that consequently $h(a_1, \dots, a_k)$ is not bounded above by a polynomial in $a_i$ for $1 \leq i < \frac{k}{4}$.

This contrasts the cases $k = 2$ and $k = 3$, where $h(a_1, a_2)$ and $h(a_1, a_2, a_3)$ are quadratically bound in $a_1$. In light of this, we pose the following problem.

\begin{problem}
	For $k \geq 4$, is there a smallest integer $p(k)$, $\frac{k}{4} \leq p(k) \leq k$, such that $h(a_1, \dots, a_k)$ is polynomially bound in $a_{p(k)}$?
\end{problem}

Expanding the range of admissible values $r$ for fixed $b$ such that $h(b, r)$ is $\Theta(b + r)$ is another problem of interest, possibly requiring more delicate arguments involving sum-free sets in non-Abelian groups. 

\begin{problem}
For $b, r \in \mathbb{N}$ such that $3 \leq b < r \leq \binom{b + 1}{2} - 1$, what is the largest value of $r$ such that $h(b, r) = \Theta(b + r)$?
\end{problem}

%
%
%
%
%

\section*{Acknowledgements}

The author would like to thank Yair Caro for the re-colouring idea behind Theorem \ref{qKUpperBound}, along with Josef Lauri and Christina Zarb for their input which helped improve the presentation of this paper. 

The author would also like to thank the anonymous reviewers for their valuable input, which also helped improve the presentation of this paper. 

\bibliographystyle{plain}
\bibliography{flip_graphs_bibliography}

\begin{thebibliography}{1}

\bibitem{ABDULLAH20151}
M.A. Abdullah and M.~Draief.
\newblock Global majority consensus by local majority polling on graphs of a given degree sequence.
\newblock {\em Discrete Applied Mathematics}, 180:1--10, 2015.

\bibitem{Alon_Kleitman_1990}
N.~Alon and D.~J. Kleitman.
\newblock Sum-free subsets.
\newblock In A.~Baker, B.~Bollobás, and A.Editors Hajnal, editors, {\em A Tribute to Paul Erd\H{o}s}, page 13–26. Cambridge University Press, 1990.

\bibitem{caroPersonalComm}
Y.~Caro.
\newblock \textit{personal communication}, 2023.

\bibitem{caro2023flip}
Y.~Caro, J.~Lauri, X~Mifsud, R.~Yuster, and C.~Zarb.
\newblock Flip colouring of graphs.
\newblock {\em Graphs and Combinatorics}, 40(106), 2024.

\bibitem{caro2018effect}
Y.~Caro and R.~Yuster.
\newblock The effect of local majority on global majority in connected graphs.
\newblock {\em Graphs and Combinatorics}, 34(6):1469--1487, 2018.

\bibitem{FISHBURN1986165}
P.C. Fishburn, F.K. Hwang, and Hikyu Lee.
\newblock Do local majorities force a global majority?
\newblock {\em Discrete Mathematics}, 61(2):165--179, 1986.

\bibitem{GreenBen2005Ssia}
B.~Green and I.~Z. Ruzsa.
\newblock Sum-free sets in abelian groups.
\newblock {\em Israel Journal of Mathematics}, 147(1):157--188, 2005.

\bibitem{sheffield2025}
N.~S. Sheffield and Z.~Xi.
\newblock Graphs with the same edge count in each neighborhood.
\newblock \textit{arXiv preprint arXiv:2507.14473}, 2025.

\bibitem{TaoVan}
T.~Tao and V.~Vu.
\newblock Sumfree sets in groups: a survey.
\newblock {\em Journal of Combinatorics}, 8, 03 2016.

\end{thebibliography}

\end{document}